  \definecolor{theblue}{rgb}{0.02,0.04,0.7}%
  \definecolor{thered}{rgb}{0.8,0.04,0.07}%
  \definecolor{thegreen}{rgb}{0.06,0.44,0.08}%
  \definecolor{thegrey}{gray}{0.5}%
  \definecolor{theshade}{gray}{0.92}%
\newtheorem*{theorem*}{Theorem}
\newtheorem{theorem}[equation]{Theorem}
\newtheorem{proposition}[equation]{Proposition}
\newtheorem*{proposition*}{Proposition}
\newtheorem{lemma}[equation]{Lemma}
\newtheorem{lemma-def}[equation]{Lemma-Definition}
\newtheorem{claim}[equation]{Claim}
\theoremstyle{definition}
\newtheorem{definition}[equation]{Definition}
\newtheorem{example}[equation]{Example}
\newtheorem*{example*}{Example}
\newtheorem{remark}[equation]{Remark}
\newtheorem*{remark*}{Remark}
\newtheorem*{variant*}{Variant}
\newtheorem*{notation*}{Notation}
\newtheorem{paragr}[equation]{} 
\numberwithin{equation}{subsection}%
\newcommand{\cprod}[1]{\wedge^{#1}} 
\newcommand{\eqdef}{\overset{\text{\normalfont\tiny def}}{=}}
\renewcommand{\phi}{\varphi}
\renewcommand{\epsilon}{\varepsilon}
\newcommand{\ab}{\mathrm{ab}}
\newcommand{\op}{\mathrm{op}}
\newcommand{\pt}{\mathrm{pt}}
\newcommand{\loccit}{loc.\ cit.\xspace}
\newcommand{\ie}{i.e.\xspace}
\newcommand{\eg}{e.g.\xspace}
\newcommand{\lto}{\longrightarrow} 
\newcommand{\lmto}{\longmapsto}
\newcommand{\iso}{\simeq}
\newcommand{\isoto}{\overset{\sim}{\rightarrow}} 
\newcommand{\lisoto}{\overset{\sim}{\longrightarrow}} 
\newcommand{\field}[1]{\ensuremath{\mathds{#1}}}
\newcommand{\ZZ}{\field{Z}}
\newcommand{\derived}[1]{\mathds{#1}}
\newcommand{\derL}{\derived{L}}
\newcommand{\derD}{\derived{D}} 
\newcommand{\HH}{\mathrm{HH}}
\newcommand{\SH}{\mathrm{SH}}
\newcommand{\DerD}{\mathrm{D}} 
\newcommand{\cat}[1]{\mathsf{#1}} 
\newcommand{\twocat}[1]{\mathbf{#1}}  
\newcommand{\bicat}[1]{\mathbf{#1}} 
\DeclareMathOperator{\Ob}{Ob}
\newcommand{\B}{\cat{B}}
\newcommand{\kalg}{k\text{-}\cat{Alg}}
\newcommand{\site}[1]{\cat{#1}}
\newcommand{\sS}{\site{S}}
\newcommand{\s}{\sS}
\newcommand{\topos}[1]{\cat{#1}}
\newcommand{\T}{\topos{T}}
\newcommand{\shv}[1]{\site{#1}\sptilde}
\newcommand{\shs}{\shv{S}}
\newcommand{\smp}[1]{\underline{#1}}
\newcommand{\stack}[1]{\mathscr{#1}}
\newcommand{\sta}{\stack{A}}
\newcommand{\stb}{\stack{B}}
\newcommand{\stp}{\stack{P}}
\newcommand{\stq}{\stack{Q}}
\newcommand{\str}{\stack{R}}
\newcommand{\sts}{\stack{S}}
\newcommand{\stt}{\stack{T}}
\newcommand{\twostack}[1]{\mathfrak{#1}}
\newcommand{\bistack}[1]{\mathfrak{#1}}
\newcommand{\xbm}{\bicat{XBiMod}} 
\newcommand{\XBM}{\bistack{XBiMod}} 
\newcommand{\ch}{\bicat{Ch}^{[-1,0]}}
\newcommand{\CH}{\bistack{Ch}^{[-1,0]}}
\newcommand{\tors}{\operatorname{\textsc{Tors}}}
\newcommand{\picstacks}{\twostack{Pic}}
\newcommand{\PIC}{\picstacks}
\newcommand{\pic}{\twocat{Pic}}
\DeclareMathOperator{\coker}{coker}
\newcommand{\del}{\partial} 
\DeclareMathOperator{\Der}{Der}
\DeclareMathOperator{\shDer}{\underline{\mathrm{Der}}}
\DeclareMathOperator{\shDerD}{\underline{\DerD}}
\DeclareMathOperator{\shcatEnd}{\mathscr{E}\mspace{-3mu}\mathit{nd}}
\DeclareMathOperator{\Ext}{Ext}
\DeclareMathOperator{\catExt}{\cat{Ext}}
\DeclareMathOperator{\shcatExt}{\mathscr{E}\mspace{-3mu}\mathit{xt}}
\DeclareMathOperator{\catExtAlg}{\cat{ExtAlg}}
\DeclareMathOperator{\shcatExtAlg}{\mathscr{E}\mspace{-3mu}\mathit{xt}\mathscr{A}\mspace{-2mu}\mathit{lg}}
\DeclareMathOperator{\Hom}{Hom} 
\renewcommand{\hom}{\Hom}
\DeclareMathOperator{\catHom}{\cat{Hom}}
\DeclareMathOperator{\shHom}{\underline{\mathrm{Hom}}}
\DeclareMathOperator{\shcatHom}{\mathscr{H}\mspace{-3mu}\mathit{om}}
\DeclareMathOperator{\id}{id} 
\DeclareMathOperator{\im}{Im}
\DeclareMathOperator{\N}{N}
\author{Ettore Aldrovandi\footnote{\url{aldrovandi@math.fsu.edu}}\\
  \small Department of Mathematics, Florida State University}
\title{Stacks of $\mathit{Ann}$-Categories and their morphisms}%
\date{}
\begin{document}
\maketitle
\begin{abstract}
  We show that $\mathit{ann}$-categories admit a presentation by crossed bimodules, and prove that morphisms between them can be expressed by special kinds spans between the presentations. More precisely, we prove the groupoid of morphisms between two $\mathit{ann}$-categories is equivalent to that of bimodule butterflies between the presentations.  A bimodule butterfly is a specialization of a butterfly, i.e. a special kind of span or fraction, between the underlying complexes
\end{abstract}

\phantomsection  
\addcontentsline{toc}{section}{Introduction}  
\section*{Introduction}
A categorical ring is a category carrying a bimonoidal structure which resembles that of a ring, up to natural isomorphisms and coherence conditions.  There are different notions of categorical ring, according to the strength of the commutativity axiom imposed on the underlying additive categorical group. Usually the underlying categorical group is assumed to be a symmetric one \cite{MR2369166,MR3085798}. This is a sort of ``fixed point:'' in a companion paper \cite{biext2015} we have (among other things) explored the possibility of relaxing the commutativity of the additive structure, by assuming just a braiding. However, in the unital case, a categorical ring satisfying these more relaxed axioms turns out to be equivalent to one of the usual sort.

Here we take a different approach, and we explore the case where the commutativity  law on the additive structure is actually stricter, namely we study categorical rings whose underlying categorical groups are actually strictly Picard groupoids. These where introduced, under the name $\mathit{Ann}$-categories, in a series of works~\cite{MR2065328,MR2458668}; the term \emph{regular} $\mathit{Ann}$-category is used in \cite{MR3085798}. In parallel with the classical analysis of $\mathit{gr}$-categories (i.e.\ categorical groups) carried out in \cite{sinh:gr-cats}, $\mathit{Ann}$-categories were found to be classified by the  third Shukla cohomology of rings. (By contrast, categorical rings of the more general breed discussed above are classified by the third Mac~Lane cohomology: degree three is the level at which the two theories begin to diverge, although precise comparisons exist \cite{MR0098773,MR2270566}.)

It is well known the third Shukla cohomology occurs in the classification of 2-term extensions of the form
\begin{equation*}
  \xymatrix@1{%
    0 \ar[r] & A\ar[r] & M \ar[r]^\del & R \ar[r] & B \ar[r] & 0},
\end{equation*}
where $B$, $R$ are rings (or $k$-algebras, fixing a commutative ring $k$), $M$ an $R$-bimodule and $A$ a $B$-bimodule. These can be taken to be objects in a topos $\T$, which we assume to be of the form $\operatorname{Sh}(\s)$, for a site $\s$, whenever convenient. $A$, $B$, and $\del \colon M\to R$ satisfy certain axioms, discussed below, which in particular define the structure of crossed bimodule for $\del\colon M\to R$. The link with categorical groups—in fact, with $\mathit{Ann}$-categories—is that the Picard groupoid associated to the complex $\del\colon M\to R$ carries such a structure. We show below that this remains true for the stack $\str$ associated to that groupoid.

We start by requiring that our categorical rings (see below for the precise terminology adopted here) be in particular Picard groupoids.  Thus, we start from the monoidal structure on the 2-category $\pic$ of Picard stacks described by Deligne in the seminal \cite{0259.14006}, and define a categorical ring as a unital monoid object in this 2-category. We call it a ring, or ring-like stack, and in effect it is an object fibered in $\mathit{Ann}$-categories.

Our main interest is the structure of the 2-category of monoids in $\pic$, rather than the classification issue.  In general terms, our main results are that every ring-like stack is locally equivalent to the Picard groupoid associated to a crossed bimodule, and that the 2-category they form is equivalent to the bicategory $\xbm$ of crossed bimodules of $\T$. More precisely, we have, first:
\begin{theorem*}[Theorem~\ref{thm:1}]
  Let $\str$ be a ring-like stack. Then $\str$ is equivalent to the stack associated to a crossed bimodule $\del \colon M\to R$.
\end{theorem*}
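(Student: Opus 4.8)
The plan is to reduce the statement to an essentially algebraic one by means of Deligne's dictionary between Picard stacks and two-term complexes \cite{0259.14006}, and then to read off the crossed-bimodule structure from the unital monoid structure carried by $\str$. Accordingly, I would first forget the multiplication and apply Deligne's equivalence to the underlying Picard stack alone: there is a complex of abelian sheaves $K^\bullet = [\,M \xrightarrow{\del} R\,]$, concentrated in degrees $-1$ and $0$, whose associated Picard stack is equivalent to $\str$. Since the subsequent steps require honest rather than merely derived tensor products, at this point I would replace $K^\bullet$ by a termwise-flat representative of the same class in $\DerD^{[-1,0]}(\T)$, so that $K^\bullet \otimes^{\derL} K^\bullet$ is computed by the naive tensor $K^\bullet \otimes K^\bullet$. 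Everything then hinges on transporting the monoid structure of $\str$ across the equivalence and recognizing it as a crossed bimodule on $\del$.

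Next I would transport the multiplication. The bimonoidal structure supplies a biadditive functor $\str \times \str \to \str$; by the universal property of the tensor product of Picard stacks this corresponds to a morphism $\str \otimes \str \to \str$, hence, under Deligne's equivalence and the identification of the monoidal product of $\pic$ with the (truncated) derived tensor, to a chain map $\mu \colon K^\bullet \otimes K^\bullet \to K^\bullet$, well defined up to homotopy. Unwinding $\mu$ degree by degree yields exactly the expected operations: in degree $0$ a pairing $R \otimes R \to R$, the multiplication of $R$, and in degree $-1$ two pairings $M \otimes R \to M$ and $R \otimes M \to M$, the right and left $R$-actions on $M$. The chain-map condition at the boundary $(-1)\to 0$ forces $\del$ to be a morphism of $R$-bimodules, i.e.\ $\del(m\cdot r)=\del(m)\,r$ and $\del(r\cdot m)=r\,\del(m)$; and the chain-map condition at the boundary $(-2)\to(-1)$, where the target $K^{-2}$ is zero, forces $\mu_{-1}$ to annihilate $m\otimes\del(m') - \del(m)\otimes m'$, which is precisely the Peiffer identity $m\cdot\del(m') = \del(m)\cdot m'$. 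Thus the bimodule and Peiffer axioms are automatic from the mere existence of $\mu$ as a chain map.

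The remaining axioms, namely associativity and unitality of $R$ together with the module-associativity compatibilities of the two actions, are where the genuine work lies and constitute the main obstacle. In $\str$ these hold only up to the coherence $2$-isomorphisms that are part of the ring-like structure, whereas a crossed bimodule demands that $R$ be strictly associative and unital and that $M$ be a bona fide bimodule. The hard part is therefore \emph{strictification}: I must rectify the homotopy-coherent monoid $(K^\bullet,\mu)$ to a strict one without altering the equivalence class of the associated stack. Concretely, the associativity constraint of $\str$ transports to a homotopy $a$ between the two triple products $(K^\bullet)^{\otimes 3} \to K^\bullet$, and the pentagon axiom exhibits $a$ as a normalized cocycle; I would exploit the flatness of the chosen representative, together with the vanishing of the relevant obstruction group on a cofibrant model, to absorb $a$ by altering $\mu$ within its homotopy class. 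It is exactly here that the \emph{strict} commutativity of $\str$, as opposed to a mere braiding, is decisive: it eliminates the sign and braiding ambiguities that would otherwise obstruct the rectification (this is the difference, at degree three, between Shukla and Mac~Lane cohomology), and it guarantees that the strictified multiplication still satisfies the Peiffer identity on the nose. The unit object of the monoid likewise provides, after the same rectification, a two-sided unit $1 \in R$. Having produced in this way a strict crossed bimodule $\del\colon M\to R$ together with an equivalence of its associated stack with $\str$, the theorem follows.

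I expect the delicate point to be not the extraction of the operations, which is formal, but the control of the strictification in step three: one must verify that the cocycle $a$ can be trivialized \emph{compatibly} with the already-established bimodule and Peiffer relations, so that the rectified datum is a crossed bimodule and not merely a weaker structure. A careful choice of flat representative for $K^\bullet$, functorial enough that the transported coherence data become coboundaries, should make this compatibility transparent.
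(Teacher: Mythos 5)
Your first two steps are sound, and the degree-by-degree extraction is a genuine observation: a chain map $\mu\colon \tau_{\geq -1}(K^\bullet\otimes K^\bullet)\to K^\bullet$ does encode the two pairings, the $R$-bilinearity of $\del$, and — via the requirement that $\mu_{-1}$ kill the image of $M\otimes M$, i.e.\ factor through the truncation — the Pfeiffer identity. The gap is exactly where you locate the ``genuine work'': the strictification step does not go through as described. You propose to keep the fixed two-term complex $K^\bullet$ and absorb the associator $a$ by modifying $\mu$ within its homotopy class, appealing to flatness and to ``vanishing of the relevant obstruction group on a cofibrant model.'' But strict associativity and unitality are not properties one can arrange on an arbitrary fixed presentation: strictness is not invariant under quasi-isomorphism, the relevant obstruction (a Hochschild--Shukla-type class) has no reason to vanish merely because the terms of $K^\bullet$ are flat, and ``cofibrant model'' is incompatible with insisting on two terms — cofibrant replacement in the category of dg or simplicial algebras necessarily enlarges the underlying complex. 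There is even an elementary symptom of the problem: the multiplicative unit $I_\str$ is a global object of $\str$, while the projection $R\to\str$ of a fixed presentation is only locally essentially surjective, so there may be no global section of $R$ lifting $I_\str$ at all; no deformation of $\mu$ alone can create one.

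The paper's proof avoids in-place rectification altogether by mapping in a \emph{free} object. Starting from any presentation $[B\to A]\sptilde\iso\str$ of the underlying Picard stack, it forms the tensor algebra $R=T(A)$ over $\ZZ$ and defines $\varpi\colon T(A)\to\str$ on generators by the left-bracketed products $(\dots(\pi(a_1)\pi(a_2))\dots\pi(a_n))$, with the empty word sent to $I_\str$; the coherence theorems \cite{MR0296132,MR0335598} are what make $\varpi$ well defined, and associativity and unitality of $R$ are now automatic because $R$ is free. Then $M$ is defined as the homotopy kernel of $\varpi$ (pairs $(b_1\otimes\cdots\otimes b_n,\lambda)$ with $\lambda\colon\varpi(b_1\otimes\cdots\otimes b_n)\isoto 0_\str$), the differential $\del$ forgets $\lambda$, the $T(A)$-bimodule structure and the Pfeiffer identity are checked directly, and the comparison diagram with $B\to A$ shows the new complex has the same kernel and cokernel, hence presents $\str$. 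In other words, strictness is purchased by enlarging the degree-zero term to a free algebra, not by deforming the multiplication on the given complex. (A repairable version of your plan would rectify the homotopy-coherent monoid to an honest dg algebra — necessarily with more than two terms — and then apply the soft truncation $\tau_{\geq -1}$, which, as the paper notes in Example~\ref{ex:2}, yields a crossed bimodule; but this again requires abandoning the fixed complex $K^\bullet$.)
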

Given this, there arises the question of calculating the groupoid $\catHom(\sts,\str)$ of morphisms $F\colon \sts\to \str$ of ring-like stacks in terms of the presentations that are guaranteed to exist by the theorem. Here the situation is similar to the one dealt with in the case of group-like stacks in \cite{ButterfliesI}, namely that $F$ above does not translate into a naïve morphism of crossed bimodules. The correct translation is that $F$ corresponds to a diagram of the form
\begin{equation*}
  \vcenter{\xymatrix@-1pc{%
      N \ar[dd]_\del \ar[dr]^\kappa && M \ar[dl]_\imath \ar[dd]^\del \\
      & E \ar[dl]_\pi \ar[dr]^\jmath \\
      S && R
    }}
\end{equation*}
defined in sect.~\ref{sec:bimodule-butterflies} below. This is what we call a butterfly of crossed bimodules. A salient feature is that the anti-diagonal is a ring (or $k$-algebra) extension, in general non-singular.  Fixing the ``wings,'' one shows that these diagrams form a groupoid $\B(S_\bullet,R_\bullet)$, and we have
\begin{theorem*}[Theorem~\ref{thm:2}]
  There are equivalences
  \begin{equation*}
    \catHom(\sts,\str) \iso \B(S_\bullet,R_\bullet)\quad\text{and}\quad
    \shcatHom (\sts,\str) \iso \stb (S_\bullet,R_\bullet).    
  \end{equation*}
\end{theorem*}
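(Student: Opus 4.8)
The plan is to reduce to the case where $\sts$ and $\str$ are the stacks associated to crossed bimodules $S_\bullet = [N \xrightarrow{\del} S]$ and $R_\bullet = [M \xrightarrow{\del} R]$ — legitimate by Theorem~\ref{thm:1} — and then to bootstrap from the additive statement already available for Picard stacks in \cite{ButterfliesI}. Each ring-like stack has an underlying Picard stack, and forgetting the multiplication sends the butterfly above to an ordinary butterfly of the additive crossed modules. By \loccit the groupoid of morphisms of the underlying Picard stacks is equivalent to the groupoid of additive butterflies; in particular the middle object $E$, together with its exact anti-diagonal $0 \to M \xrightarrow{\imath} E \xrightarrow{\pi} S \to 0$, is produced from a morphism $F$ (concretely as a homotopy fibre, i.e.\ an object of compatible pairs), and conversely. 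My task is therefore to isolate the extra multiplicative data on each side and to match them.

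First I would make precise the multiplicative data carried by a morphism $F\colon \sts\to\str$ of ring-like stacks: beyond the additive morphism it consists of a natural isomorphism $F(x)\cdot F(y)\iso F(x\cdot y)$ together with unit and associativity–distributivity coherences inherited from the bimonoidal structure of $\pic$. The central step is to show that this data is equivalent to a ring (or $k$-algebra) structure on $E$ for which $\pi$ and $\jmath$ are homomorphisms, $0\to M\to E\to S\to 0$ is a ring extension, and $\kappa$ is compatible with the bimodule actions — in short, upgrading the additive butterfly to a bimodule butterfly. Concretely, I would transport the ring structure of $\str$ and the multiplicative constraint of $F$ to a multiplication on the pairs defining $E$, then verify associativity, the two distributive laws, and the unit axiom by invoking the coherence constraints of the monoidal structure. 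The converse assignment, sending a bimodule butterfly to a morphism, proceeds by the usual quotient-stack (torsor) construction, now carrying the ring structure along.

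With the two assignments in hand I would check functoriality with respect to $2$-morphisms on the left and morphisms of butterflies on the right, and that they are mutually quasi-inverse; this yields the first equivalence $\catHom(\sts,\str)\iso\B(S_\bullet,R_\bullet)$. For the second equivalence I would observe that every step above is local on the site $\s$ and compatible with restriction along $U'\to U$: the middle object, its multiplication, and all the coherence verifications are defined objectwise. Since $\shcatHom(\sts,\str)$ is by definition the stack $U\mapsto \catHom(\sts|_U,\str|_U)$ and $\stb(S_\bullet,R_\bullet)$ the stack of bimodule butterflies $U\mapsto \B(S_\bullet|_U,R_\bullet|_U)$, naturality of the first equivalence in $U$ promotes it to the desired equivalence of stacks, once one checks that both sides are genuinely stacks, which is routine descent.

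The main obstacle is the central step: showing that the purely multiplicative coherence data of $F$ assembles into a strictly associative and distributive ring multiplication on $E$, well defined on the extension independently of the representatives chosen for its elements, and that the resulting diagram obeys all the crossed-bimodule axioms. The delicate part is translating the associativity and distributivity constraints of the bimonoidal structure — and their mutual coherences — into the ring axioms on $E$. Because the extension is in general non-singular, one cannot assume that $M$ is a square-zero ideal of $E$: the induced multiplication on $\imath(M)$, governed by the Peiffer-type identities of the crossed bimodule, must be tracked carefully in order to verify that the construction is well defined.
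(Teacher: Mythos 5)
Your proposal follows essentially the same route as the paper: it recalls the two quasi-inverse functors from the additive (Picard) case of \cite{ButterfliesI} --- the fiber product $E = S\times_{\str}R$ of compatible pairs on one side, the torsor-of-liftings construction on the other --- and then shows they restrict to the ring-like setting by transporting the multiplicative constraint of $F$ to the product $(s,\phi,r)(s',\phi',r')=(ss',\phi\phi',rr')$ on $E$ and, conversely, using the butterfly's compatibility conditions~\eqref{eq:7} to make the lifted functor monoidal, with the stack statement obtained by locality exactly as in the paper. The ``delicate points'' you flag (distributivity via the bimonoidal coherences, the Peiffer-type identities, and the non-square-zero ideal $M$) are precisely the ones the paper's proof verifies explicitly.
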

The objects on the right are the stack versions, obtained by restricting the theorem to a variable object $S$ of $\T$.

Define $\xbm$ as the bicategory whose objects are the crossed bimodules of $\T$ and whose $\hom$-categories are the groupoids $\B(S_\bullet,R_\bullet)$. Analogously to \cite{ButterfliesI}, there is a composition $\B(S_\bullet,R_\bullet) \times  \B(T_\bullet,S_\bullet)\to \B(T_\bullet,R_\bullet)$ defined in sect.~\ref{sec:composition}. Matters are very similar to the group-like situation, so we obtain:
\begin{theorem*}[Theorems~\ref{thm:4} and~\ref{thm:3}]
  $\xbm$ is equivalent to the 2-category of ring-like stacks, \ie the 2-category of monoids in $\pic$.

  The fibered bicategory defined by $U \to \xbm (\s/U)$, $U\in \Ob (\s)$, is a (weak) 2-stack $\XBM$ which is equivalent to the 2-stack of monoids in $\PIC$, the 2-stack of Picard stacks.
\end{theorem*}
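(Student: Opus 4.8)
The plan is to exhibit an explicit pseudofunctor $\Phi$ from $\xbm$ to the $2$-category of monoids in $\pic$, and then to verify the two conditions—essential surjectivity on objects and equivalence on each hom-category—that characterize a biequivalence of bicategories. On objects, $\Phi$ sends a crossed bimodule $\del\colon M\to R$ to its associated ring-like stack $\str$, the construction underlying Theorem~\ref{thm:1}. On hom-categories, $\Phi$ is the equivalence $\B(S_\bullet,R_\bullet)\iso\catHom(\sts,\str)$ furnished by Theorem~\ref{thm:2}. With these data in place, essential surjectivity is precisely the statement of Theorem~\ref{thm:1}, and the fact that each induced functor on hom-categories is an equivalence is precisely the first half of Theorem~\ref{thm:2}; so the only genuinely new work is to promote these fiberwise equivalences to a single coherent pseudofunctor.

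The heart of the argument is therefore to check that the composition of butterflies defined in sect.~\ref{sec:composition} is carried by $\Phi$ to the composition of morphisms of ring-like stacks, compatibly with the equivalences of Theorem~\ref{thm:2}. Concretely, given butterflies $P$ and $Q$ representing $F\colon\sts\to\str$ and $G\colon\stt\to\sts$, I would trace the pushout-type construction of the composite butterfly $Q\ast P$ through the equivalence and identify its image with $F\circ G$ up to a canonical $2$-morphism; the associativity and unit coherence data for $\Phi$ are then pinned down by these identifications. Because the hom-equivalences of Theorem~\ref{thm:2} are faithful, the pentagon and triangle axioms for $\Phi$ reduce to equalities that may be checked after applying $\Phi$, where they hold by the strict associativity and unitality of composition of stack morphisms. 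This mirrors the group-like case treated in \cite{ButterfliesI}, the difference being the bookkeeping of the two-sided module actions and of the ring extension along the anti-diagonal.

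For the second statement I would upgrade the preceding biequivalence to one of fibered bicategories over $\s$. The assignment $U\mapsto\xbm(\s/U)$ is functorial in $U$ via restriction of crossed bimodules and of butterflies, and the stacky refinement $\shcatHom(\sts,\str)\iso\stb(S_\bullet,R_\bullet)$—the second half of Theorem~\ref{thm:2}—says exactly that the hom-objects of $\XBM$ agree, as stacks on $\s$, with those of the $2$-stack of monoids in $\PIC$. Since the target is already known to be a (weak) $2$-stack, and the descent property transports along a restriction-compatible fiberwise biequivalence, it follows that $\XBM$ is a $2$-stack and that $\Phi$ globalizes. Equivalently, one verifies descent directly: objects and butterflies satisfy effective descent because the underlying complexes and their extension data do, and $2$-morphisms descend because they assemble into the hom-stacks just identified.

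I expect the main obstacle to be the coherence bookkeeping of the second paragraph: making the composite-butterfly construction strictly natural enough that the associator and unitor isomorphisms can be written down and shown to satisfy the bicategory axioms, all while tracking the left and right module actions and the non-split extension along the anti-diagonal. Once the pseudofunctor $\Phi$ is in hand, both biequivalences follow formally from Theorems~\ref{thm:1} and~\ref{thm:2}.
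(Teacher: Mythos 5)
Your proposal is correct and takes essentially the same route as the paper: Theorem~\ref{thm:4} is deduced from Theorem~\ref{thm:1} (essential surjectivity), Theorem~\ref{thm:2} (equivalences on hom-groupoids), and the compatibility of butterfly composition with composition of morphisms of ring-like stacks, which is exactly the paper's Lemma~\ref{lem:3}. Your handling of the 2-stack statement—sheafifying the hom-equivalences and settling descent either by transport along the biequivalence or by direct verification for butterflies—matches the paper's appeal to the 2-descent arguments of \cite{ButterfliesI} carried over to crossed bimodules.
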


The previous theorem implies that crossed bimodules satisfy a form of 2-descent along butterflies, just as in the group-like case. In other words, a ring-like stack $\str$ can be obtained as a bilimit, relative to a hypercover $V_\bullet$ of the terminal object of $\T$, of a diagram of crossed bimodules where the various morphisms in the descent conditions are expressed by butterflies. We use this to show, in the last section, that a ring-like stack $\str$ determines a class in (a sheafified form) of Shukla cohomology. A posteriori this justifies the fact that the categorical rings considered here are of the same type as (regular) $\mathit{Ann}$-categories.  Let $\str$ be a ring-like stack with $\pi_0(\str)=B$ and $\pi_1(\str)=A$, which means that, by Theorem~\ref{thm:1}, $\str$ determines a 2-term extension of the type recalled above. The class of this extension is, locally, an element of the third Shukla cohomology $\SH^3(B/k,A)$. It is well known that it is isomorphic to the second Barr-André-Quillen cohomology $\DerD^2_k(B,A)$. By sheafification we get an object $\shDerD^2_k(B,A)$  of $\T$. We have:
\begin{proposition*}[Proposition~\ref{prop:3}]
  A ring-like stack $\str$ determines a class $[\str]$ which is a global section of $\shDerD^2_k(B,A)$.
\end{proposition*}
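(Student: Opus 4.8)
The plan is to assemble the class by $2$-descent, as suggested by Theorems~\ref{thm:4} and~\ref{thm:3}: these realize $\str$ as a bilimit, over a hypercover $V_\bullet$ of the terminal object of $\T$, of a diagram of crossed bimodules whose gluing isomorphisms are butterflies, the local presentation being furnished by Theorem~\ref{thm:1}. I would first fix such a $V_\bullet$, refining it if necessary so that over each of its terms the resolutions computing Shukla cohomology are available. Over the term $V_0$ the descent datum is a crossed bimodule $\del\colon M\to R$, which yields the $2$-term extension
\begin{equation*}
  0 \to A \to M \to R \to B \to 0
\end{equation*}
recalled in the introduction, and hence a class in $\SH^3(B/k,A)$ over $V_0$. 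Transporting it along the isomorphism $\SH^3(B/k,A)\iso\DerD^2_k(B,A)$---which I would first record is natural with respect to restriction, so that it descends to the associated sheaves---I obtain a section $s_0$ of $\shDerD^2_k(B,A)$ over $V_0$.

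The central step is to show that the two pullbacks of $s_0$ to $V_1$ agree. There the descent datum supplies a butterfly between the two pullbacks of $\del\colon M\to R$, which by Theorem~\ref{thm:2} is the form taken by the gluing equivalence of ring-like stacks. I would then prove that a butterfly between crossed bimodules identifies the two associated $2$-term extensions up to the equivalence relation defining the cohomology class: the ring extension forming the anti-diagonal of the butterfly, together with its two wings, is precisely the comparison datum exhibiting the two extensions as cohomologous. Consequently the two pullbacks of $s_0$ coincide over $V_1$.

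Granting this, $s_0$ has matching pullbacks to $V_1$, so by the sheaf property of $\shDerD^2_k(B,A)$ it glues to a unique global section, which I call $[\str]$. No condition beyond agreement on $V_1$ is needed, since for sections of an ordinary sheaf the higher coherence of the $2$-descent is automatic; this produces the asserted class.

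I expect the main obstacle to be the compatibility with butterflies used in the second step. A butterfly is a span, not a strict morphism of crossed bimodules, so its fraction-like structure must be turned into an explicit equivalence of $2$-extensions, and one must verify that this equivalence respects the Shukla (equivalently, Barr-André-Quillen) coboundaries. The delicate point is that the central object of the butterfly maps to both underlying rings and mediates between the two presentations; tracing the extension class through it, and checking independence from the auxiliary resolutions, is where the bookkeeping is heaviest.
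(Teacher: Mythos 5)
Your proposal follows essentially the same route as the paper: present $\str$ locally over a hypercover (Theorem~\ref{thm:1}), extract the local Shukla/Andr\'e--Quillen class over $V_0$, use the butterfly over $V_1$ to show its two pullbacks agree, and glue by the sheaf property (your remark that higher coherence is automatic for a $0$-truncated sheaf is correct, and the paper's invocation of the $V_2$ cocycle condition adds nothing essential for the mere gluing). The comparison step you flag as the main obstacle---turning a butterfly into an identification of the two crossed-extension classes---is exactly what the paper establishes beforehand in Propositions~\ref{prop:1} and~\ref{prop:2}, via the fraction decomposition of the butterfly (Lemma~\ref{lem:2}) and a model-category lifting argument for DGAs, so your argument amounts to the paper's proof with that lemma cited rather than proved.
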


\subsection*{Organization of the material}
\label{sec:organization}

We define ring-like stacks in section~\ref{sec:prel-main-def}, right after having recalled the relevant facts about the monoidal structure of the 2-category of Picard stacks (after Deligne). The definition and other known, but necessary, facts about crossed bimodules are collected in section~\ref{sec:crossed-bimodules}. The fact that every ring-like stack is equivalent to one associated to a presheaf of simplicial rings determined by a crossed bimodule, i.e.\ that every ring-like stack admit a presentation by a crossed bimodule, is proved in section~\ref{sec:pres-ring-like}. There, we also leverage the presentation to obtain formulas for the products of objects in a ring-like stack; we explicitly write these formulas in terms of descent data (i.e.\ cocycles) representing the objects.

Butterflies between crossed bimodules, which we call bimodule butterflies, are defined in section~\ref{sec:bimodule-butterflies-sec}, and we prove our main result on the representation of morphisms between ring-like stacks in section~\ref{sec:butt-morph-ring}.  The equivalence between the 2-category of ring-like stacks and the bicategory of crossed bimodules is discussed in section~\ref{sec:comp-2stack-xbmod}.

Finally, we make the connection with Shukla cohomology in Section~\ref{sec:shukla}. Most of the material is well known, and our intention is merely to illustrate the connection, hence our presentation is by necessity quite sketchy.

In an appendix we reproduce an argument by T.~Pirashvili to show that the definition of Shukla cohomology of ref.~\cite{MR2270566}, which is used in sect.~\ref{sec:shukla}, agrees with the associative algebra cohomology defined by Quillen in ref.~\cite{MR0257068}.

\subsection*{Notation and terminology}
\label{sec:notation-terminology}
For the hierarchy of commutativity conditions on monoidal (or actually group-like) categories and stacks we use the terms: braided, symmetric, and Picard as opposed to braided, Picard, and strictly Picard in force in, \eg \cite{0259.14006,MR95m:18006,MR1702420}.

All complexes are cohomological, that is, the differential raises the degree. In order to simplify our notation, we use lower indices for negative degrees.  In particular, for crossed (bi)modules we denote $\del\colon M\to R$, or rather the corresponding complex, by $R_\bullet$, with $R_0=R$ and $R_1=M$.

We fix a site $\s$  and the topos $\T=\shs$ of sheaves over $\s$.  A set-theoretic notation is employed.  If $F$ is an object of $\T$, then $x\in F$ means $x\in F(U)$ for an appropriate (but not relevant) $U\in \Ob(\s)$, or equivalently $x\colon U\to F$, identifying $U$ with the (pre)sheaf it represents. The same holds for the notation $x\in \str$ when $\str$ is a (pre)stack.

For simplicial manipulations we use Duskin's ``opposite index convention'' or “missing index” convention \cite[pages 207–210]{MR1897816}, with the variant that we reverse the indexing for the 1-simplices.

Finally, regarding the question of terminology, the term $\mathit{Ann}$-category was coined and used in \cite{MR2458668,MR2065328} by analogy with the better known ``$\mathit{gr}$-category,'' used to denote a 2-group, or categorical group, i.e.\ a group-like groupoid. The axioms of an $\mathit{Ann}$-category are those of a ring structure up to isomorphism and coherence, and, quite importantly, they also require the underlying group-like groupoid to be Picard. (Again, there is a terminology shift in the recent \cite{MR3085798}, which uses \emph{regular} $\mathit{Ann}$-categories.)

On the other hand, the term ``categorical ring'' often refers to a bimonoidal structure which is also ``ring-like,'' but where the underlying categorical group is only required to be symmetric \cite[see e.g.][]{MR2369166}, or even just braided, \cite{biext2015}, as opposed to strictly Picard.

Here we do not consider these more general alternatives and restrict ourselves to the Picard case, hence we use ``ring,'' or ``ring-like''-category, or ``categorical ring,'' as a strict synonym of the term $\mathit{Ann}$-category.

\subsection*{Acknowledgements}
\label{sec:acknowledgements}

I would like to thank T. Pirashvili for answering questions related to Shukla cohomology and for providing the argument reproduced in the appendix.

\section{Ring-like stacks}
\label{sec:prel-main-def}

A \emph{ring-stack,} or stack with a ring-like structure, will be a stack $\str$ in groupoids over a site $\s$ equipped with a structure making it into a so-called categorical ring.  There are different
non-equivalent definitions of such a notion, according to whether the
underlying category or stack is Picard, or merely symmetric.  Our
current stance is to assume $\str$ to be \emph{Picard,} thereby the
resulting ring-like fiber category will be akin to the Ann-categories
of ref.~\cite{MR2458668}, as opposed to those of
ref.~\cite{MR2369166}.

\subsection{ Tensor products of Picard stacks}
\label{sec:pic-stacks-tensor}

In ref.\ \cite{0259.14006} Deligne observes that the 2-category
$\PIC(\s)$ of Picard stacks over the site $\s$ is equipped with a
monoidal structure $\otimes \colon \PIC(\s)\times \PIC(\s) \to
\PIC(\s)$.  Recall that each Picard stack $\sta$ admits a presentation
\begin{equation*}
  [A_1\overset{\del}{\lto} A_0]\sptilde\lisoto \sta
\end{equation*}
where $A_1\to A_0$ is a complex of abelian sheaves on $\s$ supported
in degrees $[-1,0]$ and the left hand side above denotes forming the
associated stack. (This is the same as taking the stacky quotient
$\bigl[ A_0/A_1\bigr]$ by the action of $A_1$ on $A_0$ via the
differential of the complex.)

The tensor structure in $\PIC(\s)$ is defined as follows. If $\sta$ and $\stb$
are Picard stacks with given presentations as above, define \cite{0259.14006}:
\begin{equation*}
  \sta\otimes \stb =
  \Bigl[ \tau_{\geq -1} \bigl( A_\bullet \overset{\derL}{\otimes}
  B_\bullet \bigr)
  \Bigr]\sptilde,
\end{equation*}
where $\tau$ denotes the soft truncation. The construction of $\sta\otimes \stb$ does indeed have the expected universal property with respect to biadditive functors from $\sta\times \stb$.  In
slightly more details, for any Picard stacks $\stp$ and $\stq$ let
$\shcatHom(\stq,\stp)$ denote the Picard stack of additive
functors. Moreover, let $\shcatHom (\sta, \stb; \stp)$ denote the
Picard stack of biadditive functors.  Then there is an equivalence (of
Picard stacks)
\begin{equation*}
  \shcatHom(\sta, \stb; \stp) \iso \shcatHom (\sta\otimes \stb, \stp),
\end{equation*}
and $\otimes\colon \sta\times \stb \to \sta\otimes\stb$ is a ``universal biadditive functor.''

\subsection{Definition of ring-like stacks}
\label{sec:def-ring-like}

With these premises, define a ring-like stack by mimicking the well-known fact that a ring is a monoid in the monoidal category of abelian groups:
\begin{definition}
  \label{def:1}
  A ring-like stack over $\s$ is a Picard stack $\str$ over $\s$
  equipped with a morphism
  \begin{equation*}
    m\colon \str\otimes \str \lto \str
  \end{equation*}
  of Picard stacks and a (unit) object $I$ which together combine into
  the structure of a (lax) monoid in $\bigl(\PIC (\s),\otimes\bigr)$.
\end{definition}
In the sequel we will usually suppress $m$ from the notation, and
write $XY$ in place of $m(X\otimes Y)$, etc.
\begin{remark}
  \label{rem:1}
  If $\str$ is a ring-like stack as above and $[\del \colon M\to R]\sptilde$ is a presentation of the underlying Picard stack, then the objects of $\str$ can be interpreted as $M$-torsors with a trivialization of their extension as $R$-torsors (cf.\ refs.\ \cite{MR92m:18019,ButterfliesI}). It is important to keep in mind that this interpretation pertains to the \emph{additive} structure of $\str$. Thus, if $E_1$ and $E_2$ are two objects of $\str$, the object $E_1+E_2$ corresponds in this interpretation to the standard torsor contraction $E_1\cprod{M} E_2$.  On the other hand, the object $E_1E_2$ is an altogether different one (see below for an explicit construction).
\end{remark}
\begin{example}
  \label{ex:1}
  Let $\sta$ be a Picad stack, and let $\str = \shcatEnd (\sta)$, the Picard stack of endomorphisms with respect to the sum of additive functors induced by that of $\sta$.  Then $\str$ has a ring-like structure with multiplication given by composition.
\end{example}

A morphism of ring-like stacks $F\colon \sts \to \str$ is defined in the obvious way, that is, as a morphism of the underlying Picard stacks compatible with the $\otimes$-monoidal structures, see~\cite{MR2369166}—modulo the difference between the symmetric and the Picard conditions for the underlying categorical groups.

\section{Crossed bimodules and their quotients}
\label{sec:crossed-bimodules}

\subsection{Crossed bimodules}
\label{sec:crossed-bimodules-1}

A way to produce ring-like stacks in the above sense is to consider
complexes equipped with some additional structure, and then take the
associated stack in the usual way. The appropriate structure is that
of a \emph{crossed bimodule,} or crossed module in algebras over
$\s$.  Let us use the notation $\ch (\s)$ for the category of
complexes of abelian sheaves on $\T$ supported in degrees
$[-1,0]$. Let us recall the definition (cf.\ refs.\
\cite{MR1600246,MR1918184,MR2270566}).
\begin{definition}
  \label{def:2}
  A crossed bimodule, or algebra crossed module, of $\ch (\s)$ is a
  complex
  \begin{equation*}
    \del\colon M\lto R
  \end{equation*}
  where $R$ is a ring, $M$ is an $R$-bimodule, and $\del$ is a
  morphism of $R$-bimodules such that
  \begin{equation}
    \label{eq:8}
    (\del m_1)m_2 = m_1(\del m_2).
  \end{equation}
  for all $m_1,m_2\in M$.
\end{definition}
It is clear that the definition works for $k$-algebras, where
$k$ is a fixed commutative ring $\T$.
\begin{remark}
  \label{rem:2}
  In more intrinsic term, the last condition in the definition—the Pfeiffer identity in algebra form—amounts to the commutativity of
  \begin{equation*}
    \xymatrix@C+1pc{%
      M \times M \ar[r]^{(\id_M,\del)} \ar[d]_{(\del,\id_M)} & M\times R \ar[d] \\
      R \times M \ar[r] & M
    }
  \end{equation*}
  In fact the resulting morphism is $R$-bilinear, hence it induces a product map
  \begin{align*}
    M\otimes_R M & \lto M\\
    m\otimes m' & \longmapsto m\, \del m'
  \end{align*}
  making $M$ into a non-unital ring (or $k$-algebra), with $\del$ becoming a homomorphism of non-unital rings.  We will denote by $\langle\cdot,\cdot\rangle$ this map.
\end{remark}

The primary example of a crossed bimodule is that of a (bilateral) ideal $I$ in a ring $R$. Secondly, for any ring $R$ of $\T$ and any $R$-bimodule $M$,
\begin{equation*}
  M\overset{0}{\lto} R
\end{equation*}
is evidently a crossed bimodule.

A good supply of crossed bimodules is provided by differential rings or differential graded $k$-algebras or simplicial rings (or $k$-algebras), depending on the framework we choose, as follows.
\begin{example}[See~\cite{BFM-CFT}]
  \label{ex:2}
  Let $R^\bullet$ be a $k$-DGA supported in negative degrees, that is,
  $R^i=0$ for $i>0$. Then the soft truncation
  \begin{equation*}
    \tau_{\geq -1} R^\bullet \colon (R^{-1}/\im \del) \lto R^0
  \end{equation*}
  is a crossed bimodule.
\end{example}
\begin{example}
  Let $\smp{R}_\bullet$ be a simplicial ring.  Let $MR^\bullet$ be its Moore complex (denoted cohomologically): in each degree $MR^{-n} = \cap_{i=0}^{n-1} \ker d_i$, with $d=d_n$ restricted to $MR^{-n}$.  It is easily verified that the soft truncation
  \begin{equation*}
    \tau_{\geq -1} MR^{\bullet} \colon MR^{-1}/\im d \lto MR^0 = R_0
  \end{equation*}
  is a crossed bimodule.  For this, let $R_0$ act on $MR^{-1}=\ker d_0$ by
  \begin{equation*}
    r_0\cdot m\cdot r_1 \eqdef s_0(r_0)\, m\, s_0(r_1),
  \end{equation*}
  where $r_0,r_1\in R_0$ and $m\in \ker d_0$. In addition, if $m,m'\in R_1$, then the simplicial identities imply that the combination $d_1(m)\cdot m' - m\cdot d_1(m') = s_0d_1(m)m'-ms_0d_1(m')$ belongs to $\im d_2\colon R_2\to R_1$, since
  \begin{equation*}
    s_0d_1(m)m'-ms_0d_1(m') = d_2\bigl( s_0(m)s_1(m') - s_1(m)s_0(m')\bigr).
  \end{equation*}
  Furthermore, if $m,m'\in \ker d_0$, the combination within the parentheses on the right hand side above belongs to $MR^{-2}$.  Thus in the soft truncation the algebraic Pfeiffer identity~\eqref{eq:8} is satisfied.  
\end{example}

\begin{paragr}
The crossed bimodule $\del \colon M\to R$ determines a groupoid
\begin{equation*}
  \str_0 \colon \xymatrix@1{ R \oplus M \ar@<0.6ex>[r] \ar@<-0.3ex>[r] & R },
\end{equation*}
objectwise over $\s$, which is a presheaf of strict categorical rings: the additive structure is standard, and the multiplicative one is given, at the level of objects, by the ring structure of $R$, and at the level of morphisms by
\begin{equation}
  \label{eq:11}
  (r_0,m_0)(r_1,m_1) = (r_0r_1, r_0m_1 + m_0r_1 + m_0\del(m_1)),
\end{equation}
for all $r_0,r_1\in R$ and $m_0,m_1\in M$.  The verification of the axioms is straightforward.  The nerve of $\str_0$ (again, objectwise), is a simplicial presheaf $\N_\bullet\str_0$ where, for each $n\geq 0$,
\begin{equation*}
  \N_n \str_0 = R\oplus \underbrace{M \oplus \dots \oplus M}_{n-\text{times}}.
\end{equation*}
It is easy to see that $\N_\bullet\str$ is a simplicial ring.  For this, analogously to ref.\ \cite{MR92m:18019}, inductively define $u_n\colon \N_n\str_0 \to \N_0\str_0 = R$ by
\begin{equation*}
  u_0 = \id_R\,,\qquad u_n(y,m) = u_{n-1}(y) + \del m,
\end{equation*}
where we write an object of $\N_n\str_0 \iso \N_{n-1}\str_0 \oplus M$
as $(y,m)$, with $y\in \N_{n-i}\str_0$ and $m\in M$.  Then the ring
structure is obtained by inductively generalizing~\eqref{eq:11},
namely with the same conventions:
\begin{equation*}
  (y_0,m_0)(y_1,m_1) = (y_0y_1, u_{n-1}(y_0)m_1 + m_0u_{n-1}(y_1) + m_0\del(m_1)).
\end{equation*}
\end{paragr}

In particular, crossed bimodules are seen in this way to be equivalent to simplicial rings whose Moore complexes are supported in degrees $[-1, 0]$.

\begin{paragr}
  \label{prgr:1}
If $\del\colon M\to R$ is a crossed bimodule, one considers $A=\pi_1(R_\bullet)=\ker \del$ and $B=\pi_0(R_\bullet) =\coker \del$. It is well known and easy to see that $B$ is a ring (or $k$-algebra) and $A$ a $B$-bimodule.  One refers to the complete exact sequence
\begin{equation*}
  \xymatrix@1{%
    0 \ar[r] & A\ar[r] & M \ar[r]^\del & R \ar[r] & B \ar[r] & 0
  }
\end{equation*}
as a \emph{crossed extension} of $B$ by $A$.  Of course $A$ and $B$ are the homotopy objects of the simplicial ring determined by the crossed bimodule, in other words the homology objects of the associated Moore complex.
\end{paragr}
\begin{example}[Variant of~\ref{ex:2}]
  \label{ex:3}
  Let $\del\colon M\to R$ be a crossed bimodule as in~\ref{prgr:1}. The chain complex
  \begin{equation*}
    \xymatrix@1{%
      \cdots \ar[r] & 0 \ar[r] & A\ar[r]^\imath & M \ar[r]^\del & R}
  \end{equation*}
  is equipped with a $k$-DGA structure: define $x_ix_j$ via the bimodule structure if either $i=0$ or $j=0$, and $x_ix_j=0$ otherwise. It is quasi-isomorphic to $B$, with its structure of $k$-DGA concentrated in degree zero.  See \cite{MR2270566} for more details on the relations between $k$-DGAs and crossed bimodules.
\end{example}

\subsection{Strict morphisms}
\label{sec:strict-morphisms}

The notion of  morphism between crossed bimodules has a straightforward definition.
\begin{definition}
  \label{def:3}
  Let $\del \colon M\to R$ and $\del\colon N\to S$ be two crossed bimodules.  A morphism of crossed bimodules between them is a morphism of complexes, \ie a pair $(\alpha,\beta)$ such that in the commutative diagram
  \begin{equation*}
    \xymatrix{%
      N \ar[r]^\beta \ar[d]_\del & M \ar[d]^\del \\ S \ar[r]_\alpha & R
    }
  \end{equation*}
  $\alpha$ is a ring homomorphism and $\beta$ is $\alpha$-equivariant, that is, we have $\beta(s_0\,n\,s_1)=\alpha(s_0)\beta(n)\alpha(s_1)$ for all $s_0,s_1\in S$, $n\in N$.
\end{definition}
By a standard procedure, a morphism of crossed bimodules will induce a functor between the corresponding groupoids.  It is straightforward to verify that it is a morphism of ring-like structures. For, let $\sts_0$ and $\str_0$ be the groupoids corresponding to the complexes $N\to S$ and $M\to R$, respectively.  It is standard that $\alpha$ and $(\alpha,\beta)\colon S\oplus N\to R\oplus M$ combine to give an additive functor $\sts_0\to \str_0$.  In addition, we have, for all $(s,n)$ and $(s',n')\in S\oplus N$,
\begin{align*}
  (\alpha (s),\beta (n))\otimes (\alpha (s'),\beta (n')) &=
  ( \alpha (s)\alpha(s'), \alpha(s)\beta(n')+\beta(n)\alpha(s')
  + \beta(n)\del\beta(n') ) \\
  & = (\alpha (ss'), \beta(sn' + ns') + \beta(n)\alpha(\del n')) \\
  & = (\alpha (ss'), \beta(sn' + ns') + \beta(n\,\del n')),
\end{align*}
and the latter is just the image of $(s,n)\otimes (s',n')$.  Thus $(\alpha,\beta)$ gives a morphism of strict categorical rings.

We recall the notion of homotopy. Let $(\alpha,\beta)$ and $(\alpha',\beta')$ be two morphisms between $\del \colon N\to S$ and $\del \colon M\to R$, as in Definition~\ref{def:3} above.
\begin{definition}
  \label{def:8}
  A \emph{homotopy} $h\colon (\alpha',\beta') \Rightarrow (\alpha,\beta)$ is a $k$-linear map $h\colon S\to M$ such that:
  \begin{subequations}
    \label{eq:2}
    \begin{align}
      \label{eq:3}
      \alpha' -\alpha &= -\del \circ h, \\
      \label{eq:4}
      \beta' - \beta &= -h \circ \del, \\
      \intertext{and, for all $s,s'\in S$,}
      \label{eq:5}
      h(ss) &= \alpha(s)h(s') + h(s)\alpha(s') - \del h(s) \, h(s').
    \end{align}
  \end{subequations}
\end{definition}
\begin{remark}
  \label{rem:5}
  The first two conditions amount to the standard definition of chain homotopy (for complexes supported in degrees $[-1,0]$).  The third can be given the following interpretation.  Consider the complex of Hochschild cochains for $S$ with values in the $S$-bimodule $M$ (cf.\ \cite{MR1600246}, the bimodule structure is via the homomorphism $\alpha$).  Let us denote the Hochschild coboundary by $\delta$. Then~\eqref{eq:5} can be recast as
  \begin{equation*}
    \delta h = \langle h\,,\, h\rangle,
  \end{equation*}
where the right-hand side is the product introduced in Remark~\ref{rem:2}.
\end{remark}
A homotopy $h$ determines a morphism of functors $(\alpha',\beta')\Rightarrow (\alpha,\beta)$ between the categorical rings $\sts_0$ and $\str_0$. Also, it is easily verified that $\tilde h = -h$ is a homotopy from $(\alpha,\beta)$ to $(\alpha',\beta')$, thus morphisms of crossed bimodules and homotopies between them form a groupoid, denoted $\catHom(S_\bullet,R_\bullet)$.

\subsection{Associated ring-like stacks}
\label{sec:associated-stacks}

Let $\del \colon M\to R$ be a crossed bimodule, and let $\str_0$ the corresponding groupoid
\begin{equation*}
  \str_0 \colon \xymatrix@1{ R \oplus M \ar@<0.6ex>[r] \ar@<-0.3ex>[r] & R },
\end{equation*}
as above.  We have observed that it is a presheaf of categorical rings on $\s$, with corresponding strict additive bifunctor $m_0\colon \str_0\times \str_0\to \str_0$.  Let $\str=\bigl[ M\to R\bigr]\sptilde$ be the \emph{associated Picard stack,} and $j\colon \str_0 \to \str$ the corresponding local equivalence. By the usual universal property argument, we have the equivalence
\begin{equation*}
  \shcatHom(\str_0 \times \str_0, \str) \iso \shcatHom(\str\times \str, \str),
\end{equation*}
cf.\ \cite[\S 1.4.10]{0259.14006}, hence the composite morphism $j\circ m_0\colon \str_0 \times \str_0 \lto \str$ yields
\begin{equation*}
  m\colon \str \times \str \lto \str.
\end{equation*}
Thus $\str$ is a ring-like stack.

As we have observed, a  morphism of crossed bimodules $(\alpha,\beta)\colon S_\bullet \to R_\bullet$ induces one between groupoids $F_0\colon \sts_0\to \str_0$ (cf.\ sect.~\ref{sec:strict-morphisms}).  If we compose the latter with $\jmath\colon \str_0\to \str$, again by standard arguments, there results a morphism
\begin{equation*}
  F\colon \sts \lto \str
\end{equation*}
between the associated Picard stacks, which is easily seen to be a morphism of ring-like stacks.

We regard these morphisms as strict in the following sense.
\begin{definition}
  \label{def:4}
  Let $\sts\iso [N\to S]\sptilde$ and $\str \iso [M\to R]\sptilde$ be the associated stacks. A morphism $F\colon \sts\to \str$ of ring-like stacks is \emph{strict} if it arises from a crossed bimodule morphism between the presentations.
\end{definition}
An equivalent way of stating the notion of strict morphism would be to say that $F\colon \sts\to \str$ is strict whenever it  arises from a morphism of the underlying prestacks $\sts_0$ and $\str_0$.  Due to Theorem~\ref{thm:1} below, the notion of strict morphism makes sense for all ring-like stacks.

\section{Ring-like stacks and their presentations}
\label{sec:pres-ring-like}

Every Picard stack of $\T$ has a presentation by a complex of abelian sheaves supported in degrees $[-1,0]$. If $\str$ is a ring-like stack, we prove the presentation is a crossed bimodule. We use them to discuss the forms of the descent data (i.e.\ the cocycles) and the monoidal structures.
Later, in section~\ref{sec:comp-2stack-xbmod}, we discuss the significance from the point of view of the 2-category of Picard stacks. 

\subsection{Presentations of ring-like stacks}
\label{sec:pres-ring-st}

\begin{theorem}
  \label{thm:1}
  Let $\str$ be a ring-like stack. Then $\str$ admits a presentation by a crossed bimodule $\del \colon M\to R$.
\end{theorem}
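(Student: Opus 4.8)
The plan is to realize $\str$ as the homotopy kernel of a sufficiently large \emph{strict} ring mapping onto it, in direct analogy with the crossed-module presentation of group-like stacks in \cite{ButterfliesI,MR92m:18019}. Every Picard stack admits an essentially surjective additive functor $p\colon A_0\to \str$ from a sheaf of abelian groups $A_0$, regarded as a discrete Picard stack; the strict Picard hypothesis on the additive structure is exactly what guarantees such a two-term presentation in the first place. This $A_0$ need not be closed under the multiplication, so I would replace it by the free ring (tensor algebra) $R$ on $A_0$. Since $m$ is biadditive and associative up to coherent isomorphism, freeness of $R$ lets us extend $p$ to a morphism $q\colon R\to\str$ of ring-like stacks, with $R$ viewed as a strict ring-like stack; and $q$ remains essentially surjective because $p$ already hits every object locally.

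Next I would set $M = R\times_{\str} 0$, the homotopy fibre of $q$ over the unit object for the additive structure: concretely the sheaf of pairs $(r,\phi)$ with $r\in R$ and $\phi\colon q(r)\isoto 0$. Because $R$ is discrete and $0$ is a point, $M$ is again a sheaf of abelian groups, and $\del(r,\phi)=r$ defines a complex $[M\to R]$. A standard computation of homotopy objects gives $\Ker\del = \pi_1(\str)$ and $\coker\del = \pi_0(\str)$, and the induced morphism $[M\to R]\sptilde\to\str$ is fully faithful and essentially surjective, hence an equivalence of the underlying Picard stacks.

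It then remains to equip $[M\to R]$ with a crossed bimodule structure compatible with the multiplication. The ring $R$ acts on $M$ on both sides: for $r'\in R$ and $(r,\phi)\in M$, biadditivity of $m$ turns $\phi$ into a trivialization of $q(r'r)\iso q(r')q(r)$ (using $q(r)\iso 0$ and the null-homotopy $x\cdot 0\iso 0$), which defines $r'\cdot(r,\phi)$; symmetrically on the right. With these actions $\del$ is tautologically a morphism of $R$-bimodules.

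The main obstacle is the Pfeiffer identity~\eqref{eq:8} together with the coherence bookkeeping that the free extension $q$ demands. For $m_i=(r_i,\phi_i)$, the identity $(\del m_1)m_2 = m_1(\del m_2)$ unwinds to the assertion that the two trivializations of $q(r_1r_2)$ — one obtained from $\phi_2$ in forming $r_1\cdot m_2$, the other from $\phi_1$ in forming $m_1\cdot r_2$ — coincide, which is precisely a bimonoidal coherence constraint of $\str$. Equivalently, via Deligne's description of $\otimes$ as $\tau_{\geq -1}(\derL\otimes)$, the multiplication corresponds to a chain map
\[
  \tau_{\geq -1}\bigl( R_\bullet \overset{\derL}{\otimes} R_\bullet\bigr)\lto R_\bullet ,
\]
whose degree-zero part is the product on $R$, whose degree $-1$ part is the pair of $R$-actions on $M$, and whose factoring through the soft truncation encodes~\eqref{eq:8}, the chain-map condition recovering that $\del$ is bilinear. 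The real content is that this is a priori only a homotopy-coherent datum, and the point of choosing $R$ free is to rigidify it into a strict crossed bimodule. Once the associativity, unit, and annihilation isomorphisms are organized coherently, $[M\to R]$ is a crossed bimodule whose associated ring-like stack is $\str$, as desired.
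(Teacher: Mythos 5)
Your proposal follows essentially the same route as the paper's own (sketched) proof: present the underlying Picard stack, pass to the tensor algebra $R=T(A_0)$ with the coherence theorems guaranteeing the extension $q\colon R\to\str$ of the essentially surjective projection, define $M$ as the homotopy kernel of $q$ (pairs $(r,\phi)$ with $\phi\colon q(r)\isoto 0_\str$), and extract the bimodule structure and Pfeiffer identity from the bimonoidal coherence of $\str$. The construction, the role of freeness, and the level of detail in handling~\eqref{eq:8} all match the paper's argument.
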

\begin{proof}[Proof (Sketch)]
  Take a presentation $\str \iso [B\to A]\sptilde$ of the underlying Picard stack of $\str$. The complex $B\to A \equiv A_1\to A_0$ is just a complex of abelian groups of $\T$.

  Consider the tensor algebra $T(A)$ over $A$, where $T(\cdot)$ is taken over $\ZZ$.  We claim the projection $\pi\colon A\to \str$ factors through $T(A)$.  To see this, define $\varpi\colon T(A)\to \str$ by
  \begin{equation*}
    \varpi (a_1\otimes \dots\otimes a_n) =
    (\dots (\pi(a_1)\pi(a_2)) \dots \pi(a_n)\dots ),
  \end{equation*}
  using the left bracketing for the expression on the right. We want this to be unital, namely for $n=0$ we send $1$ to the $I_\str$, the multiplicative unit object of $\str$.  One can view the $a_i$, $i=1,\dots,n$ as parametrizing a collection of objects of $\str$ via $\pi$.  Thus, by \cite{MR0296132,MR0335598}, $\varpi$ is well defined. It is also essentially surjective, since $\pi$ is. Now, define $M$ as the homotopy kernel of $\varpi\colon T(A)\to \str$. An element of $M$ is a pair $(b_1\otimes\dots\otimes b_n,\lambda)$ where $\lambda\colon \varpi(b_1\otimes\dots \otimes b_n)\isoto 0_\str$. Forgetting $\lambda$ gives the differential $\del\colon M\to T(A)$. As it is easily seen, $M$ is a $T(A)$-bimodule, and computations similar to those in \cite[5.3.8]{ButterfliesI} show that the Pfeiffer identity holds. Thus the complex $\del \colon M\to R$, with $R=T(A)$, is a crossed bimodule. We also have a commutative diagram
  \begin{equation*}
    \xymatrix{%
      B \ar[r]^\del \ar[d]_{\eta_1} & A \ar[r]^\pi \ar[d]^{\eta_0} & \str \ar@{=}[d] \\
      M \ar[r]_\del & T(A) \ar[r]_\varpi & \str
    }
  \end{equation*}
  where $\eta_0$, $\eta_1$ are monomorphisms. It is easily seen that $\del\colon B\to A$ and $\del\colon M\to T(A)$ have the same kernel and cokernel, which then coincide with $\pi_1(\str)$ and $\pi_0(\str)$.
\end{proof}
In the following we will always use a presentation $\del\colon M\to R$ by a crossed bimodule.

\subsection{Objects and  products in a ring-like stack}
\label{sec:objects-products}

The standard geometric interpretation of $\str = \bigl[ M\to R \bigr]\sptilde$ is obtained by observing that it is equivalent (as a Picard stack) to $\tors(M,R)$, the stack of $M$-torsors $E$ equipped with a trivialization $s\colon E\cprod{M}R\isoto R$. If $E$ and $E'$ are two $M$-torsors, it is standard that their sum $E+E'$ is given by the $M$-torsor $E\cprod{M}E'$ equipped with the trivialization $s+s'$. The projection  morphism $\pi_\str\colon R\to \tors(M,R)$ assigns to $r\in R$ the trivial torsor $M$ equipped with the $M$-equivariant map that sends $0$ to $r$. (Thus $m\in M$ is sent to $r+\del m$.)  In particular, $(M,0)=\pi_\str(0)$ will be identified with the zero object $0_\str$ (the unit of the sum operation).

Less standard is the product $EE'=m(E,E')$ induced by the second monoidal structure of the ring-like stack $\str$.  This structure can be described as follows. First, a local description. To local data (\ie sections) $e\in E$ and $e'\in E'$ we assign the trivial $M$ torsor, which we can think as being generated by the symbol $\lbrace e,e'\rbrace$.  Recall that $E$ and $E'$ have trivializations of their push-outs as $R$-torsors via $\del$, by way of $M$-equivariant maps $s$ and $s'$, respectively.  The trivial $M$-torsor associated to $(e,e')$ is equipped with the map denoted $ss'$ sending the generator $\lbrace e,e'\rbrace$ to $s(e)s'(e')\in R$.  Replacing the pair $(e,e')$ with $(e+m,e'+m')$, results in the isomorphism of trivial torsors such that:
\begin{equation*}
  \lbrace e,e'\rbrace \lmto \lbrace e+m,e'+m'\rbrace - \bigl( s(e)m' + ms'(e') + m\del m'\bigr).
\end{equation*}
The sought-after $M$-torsor $EE'$ will be obtained by gluing the above trivial torsors by way of this isomorphism. (It is clear that it satisfies the appropriate cocycle condition.)

An alternative more global description simply is obtained by observing that the above construction presents $EE'$ as the quotient of $E\times E'\times M$ by the action of $M\times M$ given by:
\begin{equation*}
  (e,e',m_0) \lmto (e+m,e'+m', m_0 + s(e)m' + ms'(e') + m\del m'), \quad m,m'\in M.
\end{equation*}
The correspondence between the two pictures is that $\lbrace e,e'\rbrace$ is the class of $(e,e',0)$, and that in the resulting $M$-torsor we have:
\begin{equation*}
  \lbrace e+m,e'+m'\rbrace = \lbrace e,e'\rbrace+ \bigl( s(e)m' + ms'(e') + m\del m'\bigr).
\end{equation*}
Note that the map $ss$ defined above is compatible with this relation, hence it is well defined as an $M$-equivariant map $ss\colon EE'\to R$.

The unit object $I_\str$ for the just defined multiplicative structure can be identified with $(M,1)=\pi_\str(1)$. Indeed, if $E$ is any $(M,R)$-torsor, we have the standard structure isomorphisms:
\begin{align*}
  \lambda_E \colon I_\str E & \lto E & \rho_E \colon E\; I_\str & \lto E \\
  \lbrace 0,e \rbrace & \lmto e, & \lbrace e,0\rbrace & \lmto e.
\end{align*}
It is easily checked that they are well defined and functorial.

\subsection{Cocycles}
\label{sec:cocycles}

Objects of $\str$ can be described in terms of descent data.  Given a presentation, descent data become just cocycle representations for such torsors as described above.  Using these data, the ring structure on $\str$ is very concretely described by localized versions of the formulas for $\str_0$, as follows.

Let $V_\bullet\to U$ be a hypercover of $\T$. An object $E$ over $U$ will be represented by a triple $(V_\bullet,r,m)$, where $r\in R(V_0)$, and $m\in M(V_1)$ such that
\begin{align*}
  d_0^*r - d_1^*r &= \del m \\
  d_2^*m + d_0^*m &= d_1^*m.
\end{align*}
If now $E, E'$ are two objects of $\str_U$, and $(V_\bullet,r,m)$ and $(V_\bullet,r',m')$ the corresponding descent data, where the hypercover $V_\bullet\to U$ is assumed for simplicity to be same for both objects, the object $E+E'$ is represented by $(V_\bullet,r_0+r_1, m_0+m_1)$, whereas the multiplication $EE'=m(E,E')$ is represented by the triple
\begin{equation*}
  \bigl(V_\bullet, rr', (d_1^*r)m' + m(d_0^*r') + m\del m'\bigr) =
  \bigl(V_\bullet, rr', (d_1^*r)m' + m(d_1^*r') \bigr).
\end{equation*}
These formulas are most transparent in the Čech formalism. Assuming $\T = \mathrm{Sh}(\s)$, and $\s$ to have (finite) limits, if $(U_i)_{i\in I}$ is a cover of $U\in \s$, we write the data for $E$ as a collection $(r_i, m_{ij})$, where $r_i\in R(U_i)$ and $m_{ij}\in M(U_i\times_{U}U_j)$, such that
\begin{align*}
  r_j -r_i &= \del m_{ij},\\ m_{ij} + m_{jk}  &= m_{ik},
\end{align*}
and similarly for $E'$.  Therefore $E+E'$ is represented by
\begin{equation*}
  (r_i+r'_i, m_{ij} + m'_{ij},
\end{equation*}
whereas $EE'$ by
\begin{equation*}
  (r_ir'_i, r_im'_{ij} + m_{ij}r'_j).
\end{equation*}

The cocycle, that is, the triple $(V_\bullet,r,m)$ corresponds to a simplicial map
\begin{equation*}
  \xi \colon V_\bullet \lto \N_\bullet\str_0,
\end{equation*}
see \cite{MR2597739} and \cite[\S 3.3.1-3.4.4]{ButterfliesI}. The simplicial ring structure of $\N_\bullet\str_0$ gives pointwise sum and product operations for cocycles. Hence $E+E'$ and $EE'$ give rise and are determined by the simplicial maps $\xi+\xi'$ and $\xi\xi'$, defined by
\begin{equation*}
  (\xi + \xi')_n \eqdef \xi_n+\xi'_n \quad \text{and} \quad (\xi\xi')_n\eqdef \xi_n\xi'_n.
\end{equation*}
By explicitly writing down the simplicial maps (see \loccit or, \eg, \cite{MR1206474}) we arrive at the formulas for the addition and multiplication of cocycles given above.

\section{Bimodule butterflies}
\label{sec:bimodule-butterflies-sec}

\emph{Butterflies} (\cite{Noohi:notes,ButterfliesI}) are certain kind of diagrams computing morphisms between length 2-complexes in the homotopy category.  We specialize the concept to the present situation.

\subsection{Bimodule butterflies}
\label{sec:bimodule-butterflies}

Let $S_\bullet \colon \xymatrix@1{N\ar[r]^\del& S}$ and $R_\bullet \colon \xymatrix@1{ M\ar[r]^\del & R}$ be two crossed bimodules of $\T$.
\begin{definition}
  \label{def:5}
   A \emph{crossed bimodule butterfly,} or simply a \emph{butterfly,} for short, from $S_\bullet$ to $R_\bullet$ is a diagram 
  \begin{equation}
    \label{eq:1}
    \vcenter{\xymatrix@-1pc{%
        N \ar[dd]_\del \ar[dr]^\kappa && M \ar[dl]_\imath \ar[dd]^\del \\
        & E \ar[dl]_\pi \ar[dr]^\jmath \\
        S && R
      }}
  \end{equation}
  where:
  \begin{enumerate}
  \item $E$ is a ring (or $k$-algebra);
  \item The NE-SW diagonal $\xymatrix@1@C-1pc{M \ar[r] &E \ar[r] & S}$ is an extension, namely it is an exact sequence of the underlying modules, and $\pi \colon E\to S$ is a ring (or $k$-algebra) homomorphism;
  \item The NW-SE diagonal $\xymatrix@1@C-1pc{N \ar[r] & E\ar[r] & R}$ is a complex of abelian groups (or $k$-modules), namely $\jmath\circ \kappa=0$; $\jmath\colon E\to E$ is a ring ($k$-algebra) homomorphism;
  \item For all $m\in M$, $n\in N$, and $e\in E$, the following compatibility conditions hold:
    \begin{subequations}
      \label{eq:7}
      \begin{align}
        \imath (m\, \jmath (e)) &= \imath (m)\, e \\
        \imath (\jmath (e)\, m) &= e\, \imath (m) \\
        \kappa (n\, \pi (e)) &= \kappa (n)\, e \\
        \kappa (\pi (e)\, n) &= e\, \kappa (n).
      \end{align}
    \end{subequations}
  \end{enumerate}
\end{definition}
There are some elementary consequences of the definition.
\begin{lemma}
  \label{lem:1}
  In the butterfly defined above:
  \begin{enumerate}
  \item $M$ is a bilateral ideal in $E$.
  \item The images of $N$ and $M$ in $E$ multiply to zero: $\kappa(N)\imath(M)=0$ in $E$.
  \end{enumerate}
\end{lemma}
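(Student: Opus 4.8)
The plan is to read both assertions straight off the butterfly axioms of Definition~\ref{def:5}, the only preliminary being that exactness of the NE--SW diagonal forces $\imath$ to be a monomorphism, so that we may identify $M$ with its image $\imath(M)=\ker\pi\subseteq E$ (here $\pi$ is the ring homomorphism of condition~(2)). For part~(1), I would then use the first two compatibility conditions of~\eqref{eq:7} to exhibit that $\imath(M)$ is stable under multiplication by $E$ on both sides: given $m\in M$ and $e\in E$, the first identity reads $\imath(m)\,e=\imath\bigl(m\,\jmath(e)\bigr)$ and the second reads $e\,\imath(m)=\imath\bigl(\jmath(e)\,m\bigr)$, and both right-hand sides manifestly lie in $\imath(M)$. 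Hence $\imath(M)$ is a bilateral ideal, which is the assertion. (Equivalently, since $\pi$ is a ring homomorphism, $\ker\pi=\imath(M)$ is automatically two-sided; the genuine content of conditions~\eqref{eq:7} is that this ideal structure coincides with the $R$-bimodule structure on $M$ transported to $E$ along $\jmath\colon E\to R$.)

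For part~(2) I would specialize the second compatibility condition in~\eqref{eq:7} to $e=\kappa(n)$ with $n\in N$. This gives $\kappa(n)\,\imath(m)=\imath\bigl(\jmath(\kappa(n))\,m\bigr)$ for every $m\in M$. Axiom~(3) asserts that the NW--SE diagonal is a complex, i.e.\ $\jmath\circ\kappa=0$, so $\jmath(\kappa(n))=0$ and the right-hand side collapses to $\imath(0)=0$. Therefore $\kappa(n)\,\imath(m)=0$ for all $n,m$, that is, $\kappa(N)\,\imath(M)=0$ in $E$, as claimed. Specializing instead the first condition of~\eqref{eq:7} to $e=\kappa(n)$ yields the mirror identity $\imath(M)\,\kappa(N)=0$, which one may record as well.

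The computations are short and linear, so I do not expect a real obstacle; the only points that need care are bookkeeping ones. First, the expressions $m\,\jmath(e)$ and $\jmath(e)\,m$ denote the ambient right and left $R$-bimodule actions on $M$ precomposed with $\jmath$, and these must be kept distinct from multiplication in the ring $E$ appearing on the other side of~\eqref{eq:7}. Second, the phrase ``$M$ is a bilateral ideal'' is to be understood under the identification of $M$ with $\imath(M)$ via the monomorphism $\imath$, so invoking the injectivity supplied by exactness of the NE--SW diagonal is what legitimizes the statement.
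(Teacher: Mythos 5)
Your proof is correct and follows the same route the paper intends: part (1) is, as you note parenthetically, immediate from $\imath(M)=\ker\pi$ being a two-sided ideal of the ring homomorphism $\pi$ (the paper calls this ``a restatement of the second condition in the definition''), and part (2) is exactly the specialization of~\eqref{eq:7} via $\jmath\circ\kappa=0$ that the paper leaves to the reader. Your bookkeeping remarks (distinguishing the $R$-actions on $M$ from multiplication in $E$, and identifying $M$ with $\imath(M)$) are sound, and the mirror identity $\imath(M)\,\kappa(N)=0$ is a valid extra observation.
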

\begin{proof}
  The first is obvious (it is just a restatement of the second condition in the definition). The second easily follows from~\eqref{eq:7}.
\end{proof}
\begin{remark}
  \label{rem:4}
  \begin{enumerate}
  \item   The NW-SW diagonal is not necessarily an abelian extension, namely $M^2\neq 0$ in general, as an ideal in $E$. Indeed, for all $m,m'\in M$ we have
    \begin{equation*}
      \imath(m)\imath(m') = \imath(m\,\jmath\imath (m')) = \imath( m\, \del m'),
    \end{equation*}
    and $m\,\del m'$ is in general nonzero.
  \item The multiplication on $M$ induced by $E$ is the same as that induced by the crossed bimodule structure (cf.\ Remark~\ref{rem:2}).
  \end{enumerate}
\end{remark}

A shorthand notation for a butterfly \emph{from} $S_\bullet$ \emph{to} $R_\bullet$ with centerpiece $E$ will be $\bigl[ S_\bullet, E, R_\bullet\bigr]$.
\begin{definition}
  \label{def:6}
  A \emph{morphism} $\alpha \colon \bigl[ S_\bullet, E, R_\bullet\bigr] \to \bigl[ S_\bullet, E', R_\bullet\bigr]$ is a ring (or $k$-algebra) isomorphism $\alpha\colon E\isoto E'$ compatible with the structural maps of both butterflies in the sense that the following diagram commutes
  \begin{equation*}
    \xymatrix@-.5pc{%
      N \ar[dd]_\del \ar[ddrr]^<<<<\kappa \ar[rr]^{\kappa'}& & E' \ar '[dl] [ddll] \ar '[dr] [ddrr] & & M \ar[ll]_{\imath'} \ar[ddll]_<<<<\imath \ar[dd]^\del \\
      & & & & \\
      S & & E \ar[ll]_\pi \ar[rr]^\jmath  \ar[uu]_\alpha &  & R
    }
  \end{equation*}
\end{definition}
With the notion of morphism just introduced, butterflies from $S_\bullet$ to $R_\bullet$ clearly form a groupoid, denoted $\B (S_\bullet,R_\bullet)$.  Analogously to~\cite[\S 5 and \S 8]{ButterfliesI}, we can consider a local version with respect to $\s$, namely form the fibered category $\stb (S_\bullet,R_\bullet)$ from $U\mapsto \B (S_\bullet\vert_U,R_\bullet\vert_U)$, where $U\in \Ob (\s)$.  These groupoids are subgroupoids of the corresponding ones constructed by forgetting the multiplicative structures and considering just the underlying abelian sheaves (or $k$-modules). Denote them by $\B_k(S_\bullet,R_\bullet)$ and $\B_k (S_\bullet\vert_U,R_\bullet\vert_U)$, respectively. By \loccit, the latter are the fibers of a stack $\stb_k (S_\bullet, R_\bullet)$. This implies that $\stb (S_\bullet, R_\bullet)$ forms a stack as well. Note, however, that it will not be closed with respect to the symmetric structure given by ``addition'' of butterflies (cf.~\cite[\S 8]{ButterfliesI}).

\subsection{Fractions}
\label{sec:fract-determ-butt}

The diagram~\eqref{eq:1} can be completed to
\begin{equation*}
  \vcenter{\xymatrix@-1pc{%
      & N\oplus_S E \ar[dl]_{\tilde\pi} \ar[dr]^{\tilde\jmath} \ar[dd]^\del \\
      N \ar[dd]_\del \ar[dr]^\kappa && M \ar[dl]_\imath \ar[dd]^\del \\
      & E \ar[dl]_\pi \ar[dr]^\jmath \\
      S && R
    }}
\end{equation*}
where the left wing is a pull-back. With set-theoretic notation, $N\oplus_S E = \lbrace (n,e) \in N\oplus E \; \vert\; \del n = \pi e \rbrace$.  As in the abelian case, $\kappa\colon N\to E$ gives a splitting of $\tilde\pi$, so that we have an isomorphism
\begin{equation*}
  (\id_N,\id_E-\kappa)\colon  N\oplus_S E \lisoto N\oplus M,
\end{equation*}
with inverse $(\id_N,\imath + \kappa)$.  In addition, the complex $E_\bullet\colon N\oplus_S E\to E$ is a crossed bimodule: first, $N\oplus_S E$ is an $E$-bimodule with the operations (written set-theoretically as):
\begin{equation*}
  e_0\cdot (n,e) = (\pi(e_0)n, e_0e) \quad\text{and} \quad
  (n,e)\cdot e_1 = (n\pi(e_1), ee_1),
\end{equation*}
for all $e,e_0,e_1\in E$ and $n\in N$.  An elementary verification shows that the Pfeiffer identity (cf.\ Remark~\ref{rem:2})
\begin{equation*}
  \del (n_0,e_0) \, (n_1,e_1) = (n_0,e_0)\,\del(n_1,e_1)
\end{equation*}
holds.
\begin{lemma}
  \label{lem:2}
  Each wing of the above diagram determines a morphism of crossed bimodules, the left one being a quasi-isomorphism.
\end{lemma}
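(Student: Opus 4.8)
The plan is to treat the two wings separately and, in each case, to verify the two conditions of Definition~\ref{def:3}: commutativity of the defining square and equivariance of the degree-$1$ map, the degree-$0$ maps $\pi$ and $\jmath$ being ring homomorphisms by the butterfly axioms (2) and (3). Throughout I will use that the differential of $E_\bullet$ is the projection $(n,e)\mapsto e$, the $E$-bimodule structure on $N\oplus_S E$ recalled above, and the relations $\pi\kappa=\del$ (the splitting condition used to form $N\oplus_S E\cong N\oplus M$), $\jmath\kappa=0$, and $\jmath\imath=\del$ (the last from Remark~\ref{rem:4}).

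For the left wing $(\pi,\tilde\pi)\colon E_\bullet\to S_\bullet$, with $\tilde\pi(n,e)=n$, commutativity of the square $\del\circ\tilde\pi=\pi\circ\del$ is literally the pullback condition $\del n=\pi e$ defining $N\oplus_S E$. Equivariance, $\tilde\pi(e_0\cdot(n,e)\cdot e_1)=\pi(e_0)\,\tilde\pi(n,e)\,\pi(e_1)$, is read off at once from $e_0\cdot(n,e)=(\pi(e_0)n,e_0e)$ and $(n,e)\cdot e_1=(n\pi(e_1),ee_1)$. This wing presents no difficulty.

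For the right wing I first pin down $\tilde\jmath\colon N\oplus_S E\to M$: since $\pi(e-\kappa(n))=\pi(e)-\pi\kappa(n)=\del n-\del n=0$ for $(n,e)\in N\oplus_S E$, the element $e-\kappa(n)$ lies in $\ker\pi=\imath(M)$, so $\tilde\jmath(n,e)$ is well defined by $\imath(\tilde\jmath(n,e))=e-\kappa(n)$, using injectivity of $\imath$; this is exactly the second component of the isomorphism $(\id_N,\id_E-\kappa)$. Commutativity of the square follows by applying $\jmath$: $\del(\tilde\jmath(n,e))=\jmath\imath(\tilde\jmath(n,e))=\jmath(e-\kappa(n))=\jmath(e)$, since $\jmath\kappa=0$. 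The equivariance of $\tilde\jmath$ is the one genuinely computational point and the main obstacle. Expanding $\imath(\tilde\jmath(e_0\cdot(n,e)\cdot e_1))=e_0ee_1-\kappa(\pi(e_0)n\pi(e_1))$ and reducing $\kappa(\pi(e_0)n\pi(e_1))=e_0\kappa(n)e_1$ by the two identities in~\eqref{eq:7} involving $\kappa$, one obtains $e_0(e-\kappa(n))e_1$; on the other side, $\imath(\jmath(e_0)\,\tilde\jmath(n,e)\,\jmath(e_1))=e_0\,\imath(\tilde\jmath(n,e))\,e_1$ by the two involving $\imath$. Both sides equal $e_0\,\imath(\tilde\jmath(n,e))\,e_1$, so injectivity of $\imath$ gives $\tilde\jmath(e_0\cdot(n,e)\cdot e_1)=\jmath(e_0)\,\tilde\jmath(n,e)\,\jmath(e_1)$.

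It remains to show the left wing is a quasi-isomorphism, \ie that $(\pi,\tilde\pi)$ induces isomorphisms on $\pi_0$ and $\pi_1$. On kernels, $\ker(\del\colon N\oplus_S E\to E)=\{(n,0):\del n=0\}$, which $\tilde\pi$ carries isomorphically onto $\ker(\del\colon N\to S)=\pi_1(S_\bullet)$. On cokernels, $\pi$ is surjective with $\ker\pi=\imath(M)$, and $\im(\del\colon N\oplus_S E\to E)=\pi^{-1}(\im\del)$, so $\pi$ descends to an isomorphism $\coker(\del\colon N\oplus_S E\to E)\isoto\coker(\del\colon N\to S)=\pi_0(S_\bullet)$. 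Both homotopy objects are therefore matched and the left wing is a quasi-isomorphism. The right wing, by contrast, need not be one, which is exactly what allows the butterfly to encode a genuine, non-strict morphism $S_\bullet\to R_\bullet$ in the homotopy category.
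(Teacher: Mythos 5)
Your proof is correct and takes essentially the same route as the paper: the paper leaves the verification that both wings are crossed-bimodule morphisms to the reader (you supply it, including the key equivariance of $\tilde\jmath$ via the identities~\eqref{eq:7}), and for the quasi-isomorphism it applies the snake lemma to the map of extensions $0\to M\to N\oplus_S E\to E_0/\ldots$ — more precisely to the pullback of $0\to M\to E\to S\to 0$ along $\del_S$. Your direct computation of kernels and cokernels is exactly that snake-lemma argument unpacked, so the two proofs coincide in substance.
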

\begin{proof}
  The first statement is an elementary verification and it is left to the reader. The second follows from considering the pullback of extensions
  \begin{equation*}
    \xymatrix{%
    0\ar[r] & M \ar[r]^{\tilde\imath} \ar@{=}[d] & N\oplus_S E \ar[r]^{\tilde\pi} \ar[d]^{\del_E} & N \ar[r] \ar[d]^{\del_S} & 0 \\
    0\ar[r] & M \ar[r]^{\imath} & E \ar[r]^{\pi} & S \ar[r] & 0
    }
  \end{equation*}
  along $\del_S$.  An elementary application of the snake lemma yields $\pi_i(S_\bullet)\iso \pi_i(E_\bullet)$, for $i=0,1$.
\end{proof}

\subsection{Split butterflies}
\label{sec:strict-morphism}

A morphism $(\alpha,\beta)$ of crossed bimodules determines a butterfly in which the NE-SW diagonal is a trivial extension, namely
\begin{equation*}
  E = S \oplus M,
\end{equation*}
where $M$ is considered as an $S$-bimodule via $\alpha\colon S\to R$. The ring structure on $E$ is given by
\begin{equation*}
  (s,m)\,(s',m') = (ss', \alpha(s)m'+m\alpha(s') + m\,\del(m')),
\end{equation*}
and the four maps in the butterfly diagram are given by:
\begin{align*}
  \imath \colon M &\lto S\oplus M & \pi\colon S\oplus M &\lto S \\
  m &\longmapsto (0,m)& (s,m) &\longmapsto s \\ \\
  \kappa \colon N & \lto S\oplus M & \jmath \colon S\oplus M &\lto R \\
  n &\longmapsto (\del n, -\beta (n)) & (s,m) &\longmapsto \alpha(s)+\del m.
\end{align*}
The map $\sigma=(\id_S,0) \colon S\to S\oplus M$ is evidently a splitting of the exact diagonal.  More generally we have:
\begin{definition}
  \label{def:7}
  A butterfly~\eqref{eq:1} is \emph{strongly split(table)} if its NE-SW diagonal is equipped with an algebra extension splitting homomorphism $\sigma: S\to E$. Equivalently, it is isomorphic in the sense of Definition~\ref{def:6} to one arising from a morphism of crossed bimodules.
\end{definition}
Thus, a strongly split butterfly in effect corresponds to a morphism of crossed bimodules. Note that such an object is in fact a \emph{pair} $(E,\sigma)$, where $E$ is an object of $\B(S_\bullet, R_\bullet)$ and $\sigma$ is an algebra splitting.  It is easily seen that a homotopy $h\colon (\alpha',\beta')\Rightarrow (\alpha,\beta)$ of morphisms of crossed bimodules determines a morphism $\psi\colon (E,\sigma)\to (E',\sigma')$ of split butterflies.  Explicitly, if both $E$ and $E'$ are identified with $S\oplus M$, then the required homomorphism has the form.
\begin{equation*}
  \psi = (\id_S,\id_M+h) \colon S\oplus M\lto S\oplus M.
\end{equation*}
Conversely, an isomorphism $\psi\colon S\oplus M\to S\oplus M$ which fits into a morphism of (split) butterflies, necessarily has the above form, with $h\colon S\to M$ satisfying~\eqref{eq:2}.

Let us denote by $\B_{\mathrm{str}} (S_\bullet,R_\bullet)$ the resulting groupoid. By the foregoing, it is equivalent to the previously introduced  groupoid $\catHom(S_\bullet,R_\bullet)$.  There is an obvious functor $\B_{\mathrm{str}}(S_\bullet, R_\bullet) \to \B (S_\bullet,R_\bullet)$, and hence $\catHom(S_\bullet,R_\bullet) \to \B (S_\bullet,R_\bullet)$.   A better characterization will be given below.

\subsection{Butterflies and extensions}
\label{sec:butt-extens}

Let us denote by $\catExtAlg(S,M)$ the category of algebra extensions of $S$ by $M$, whose objects are algebra extensions as above, and whose morphisms are commutative diagrams
\begin{equation*}
  \xymatrix{%
    0\ar[r] & M \ar[r]^\imath \ar@{=}[d] & E \ar[r]^\pi \ar[d]^\alpha & S \ar[r] \ar@{=}[d] & 0 \\
    0\ar[r] & M \ar[r]^{\imath'} & E' \ar[r]^{\pi'} & S \ar[r] & 0
  }
\end{equation*}
The extensions we consider are not assumed to be abelian, nor are they assumed to be $k$-split.  Analogously to~\cite[\S 8]{ButterfliesI}, there is an obvious forgetful functor
\begin{equation*}
  p\colon \B(S_\bullet, R_\bullet) \lto \catExtAlg(S,M)
\end{equation*}
which is a fibration (cf.\cite{MR93k:18019}). For, if $\bigl[ S_\bullet, E', R_\bullet \bigr]$ is such that its NE-SW diagonal is isomorphic to the extension $0\to M\to E \to S \to 0$ with $\alpha \colon E\to E'$, then $\bigl[ S_\bullet, E , R_\bullet\bigr]$ is an isomorphic butterfly with structure maps $\jmath=\jmath' \circ \alpha$ and $\kappa = \kappa' \circ \alpha^{-1}$. Evidently $\alpha$ gives the corresponding morphism of butterflies.  Essential surjectivity also holds, since, rather trivially, in the extension $0\to M\to E\to S\to 0$ the morphism $M\to E$ is a crossed bimodule, and so is $0\to S$, therefore we can choose
\begin{equation*}
    \vcenter{\xymatrix@-1pc@C+1pc{%
       0 \ar[dd] \ar[dr] && M \ar[dl]_\imath \ar[dd]^\imath \\
        & E \ar[dl]_\pi \ar[dr]^\id \\
        S && E
      }} 
\end{equation*}
Note that $\bigl[ M\to E\bigr]\sptilde \iso S$, where $S$ is considered as a discrete stack, since the groupoid determined by $M\to E$ is an equivalence relation.  The groupoid $\B_{\mathrm{str}}(S_\bullet,R_\bullet)$ is the homotopy kernel of the morphism $p$ above. In fact it is easy to see the whole sequence
\begin{equation*}
  \xymatrix@1{%
    \B_{\mathrm{str}}(S_\bullet,R_\bullet) \ar[r] & \B(S_\bullet, R_\bullet) \ar[r]^<<<p & \catExtAlg(S,M)
  }
\end{equation*}
is exact.  Since the homotopy kernel is determined up to equivalence, we can rewrite this sequence as
\begin{equation*}
  \xymatrix@1{%
    \catHom (S_\bullet,R_\bullet) \ar[r] & \B(S_\bullet, R_\bullet) \ar[r]^<<<p & \catExtAlg(S,M)
  }.
\end{equation*}
By forgetting the algebra structure we get (with corresponding meanings of the symbols)
\begin{equation*}
  \xymatrix@1{%
    \catHom_{k}(S_\bullet,R_\bullet) \ar[r] & \B_{k}(S_\bullet, R_\bullet) \ar[r]^<<<p & \catExt_k(S,M)
  }
\end{equation*}
which, as observed in \cite{ButterfliesI}, is also an extension. The first object on the left is identified with the groupoid of split butterflies, \ie strict morphisms of complexes of abelian sheaves.

We can also consider the (homotopy) kernel of the forgetful functor
\begin{equation*}
  \catExtAlg(S,M) \lto \catExt_k(S,M),
\end{equation*}
which we denote by $\catExtAlg_0(S,M)$.  It consists of those algebra extensions which possess a $k$-linear splitting.  The pullback groupoid
\begin{equation*}
  \xymatrix{%
    \B_0(S_\bullet,R_\bullet) \eqdef \B (S_\bullet,R_\bullet) \times_{\catExtAlg(S_\bullet,R_\bullet)} \catExtAlg_0(S_\bullet,R_\bullet)
  }
\end{equation*}
then consists of those butterflies whose NE-SW diagonal admits a $k$-linear splitting.

The above constructions can be sheafified (or actually stackified) over $\s$.  Putting all together, we can form the diagram of stacks over $\s$:
\begin{equation*}
  \xymatrix{%
    & \stb_0(S_\bullet, R_\bullet) \ar[d] \ar[r]^<<<p \ar@{}[dr]|(.3)\lrcorner & \shcatExtAlg_0(S,M) \ar[d] \\
    \shcatHom (S_\bullet,R_\bullet) \ar[d] \ar[r] & \stb(S_\bullet, R_\bullet) \ar[d] \ar[r]^<<<p & \shcatExtAlg(S,M) \ar[d] \\
    \shcatHom_{k}(S_\bullet,R_\bullet) \ar[r] & \stb_{k}(S_\bullet, R_\bullet) \ar[r]^<<<p & \shcatExt_k(S,M)
  }
\end{equation*}
The objects on the leftmost column, as well as $\stb_0(S_\bullet, R_\bullet)$, consist of \emph{locally split} butterflies from $S_\bullet$ to $R_\bullet$ (cf.~\cite{ButterfliesI}).

\section{Butterflies as morphisms of ring-like stacks}
\label{sec:butt-morph-ring}
\numberwithin{equation}{section}%
In this section we prove our main result, that analogously to the case of group-like stacks, bimodule butterflies compute morphisms between stacks equipped with a ring-like structure.

Let $\sts$ and $\str$ be two ring-like stacks.  We denote by $\catHom(\sts,\str)$ the groupoid of (homo)morphisms from $\sts$ to $\str$, and by $\catHom_k(\sts,\str)$ the groupoid of morphisms of underlying Picard stacks. Similarly, we denote by $\shcatHom(\sts,\str)$ and $\shcatHom_k(\sts,\str)$ their respective stack analogs. Assume $\str$ and $\sts$ have presentations by crossed bimodules $R_\bullet : \xymatrix@1{M\ar[r]^\del & R}$ and $S_\bullet : \xymatrix@1{N\ar[r]^\del & S}$, respectively.
\begin{theorem}
  \label{thm:2}
  There are equivalences
  \begin{equation*}
    \catHom(\sts,\str) \iso \B(S_\bullet,R_\bullet)\quad\text{and}\quad
    \shcatHom (\sts,\str) \iso \stb (S_\bullet,R_\bullet).    
  \end{equation*}
\end{theorem}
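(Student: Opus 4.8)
The plan is to exhibit a functor $\Phi \colon \catHom(\sts,\str) \to \B(S_\bullet,R_\bullet)$ and a quasi-inverse, following the group-like template of \cite{ButterfliesI} but supplying the multiplicative data. Given a morphism $F\colon \sts\to\str$, I would build the centerpiece $E$ as the total space of the family of $M$-torsors $s\mapsto F(\pi_{\sts}(s))$, $s\in S$; set-theoretically $E=\{(s,e) : s\in S,\ e\in F(\pi_{\sts}(s))\}$. The projection $(s,e)\mapsto s$ gives $\pi\colon E\to S$, whose fiber over $0$ is acted on simply transitively by $M$, yielding $\imath\colon M\to E$ and the short exact sequence $0\to M\to E\to S\to 0$ forming the NE-SW diagonal. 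The trivialization of each torsor $F(\pi_{\sts}(s))$ as an $R$-torsor furnishes the $R$-valued map $\jmath\colon E\to R$ (using the interpretation of Remark~\ref{rem:1}), while the canonical trivialization $\pi_{\sts}(\del n)\iso 0_{\sts}$ supplies $\kappa\colon N\to E$ with $\jmath\kappa=0$.

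The main obstacle, and the genuinely new content beyond \cite{ButterfliesI}, is the ring structure on $E$ and the compatibility axioms~\eqref{eq:7}. I would define the multiplication on $E$ using that $\pi_{\sts}$ is multiplicative, so $\pi_{\sts}(ss')=\pi_{\sts}(s)\pi_{\sts}(s')$, together with the monoidal coherence isomorphism $F(XY)\iso F(X)\,F(Y)$ of a morphism of ring-like stacks: for $(s,e),(s',e')\in E$ their product lies over $ss'$ and is the image of the external product $\{e,e'\}\in F(\pi_{\sts}(s))\,F(\pi_{\sts}(s'))$ under this coherence isomorphism, where the torsor product is the one made explicit in section~\ref{sec:objects-products}. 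Associativity and unitality reduce to the pentagon and hexagon coherences of $F$ and the constraints of $\str$; bilinearity against $M$ is exactly the torsor-product formula of section~\ref{sec:objects-products}, which forces $\imath$ to be a two-sided ideal inclusion and, after unwinding, yields precisely the identities $\imath(m\,\jmath(e))=\imath(m)\,e$ and $\imath(\jmath(e)\,m)=e\,\imath(m)$. The remaining pair in~\eqref{eq:7} follows symmetrically from the $N$-side, so that $\Phi(F)$ is a bona fide butterfly; functoriality in $F$ is immediate, since a $2$-morphism $F\Rightarrow F'$ induces a ring isomorphism of centerpieces over $S$ and $R$.

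For the quasi-inverse I would use the fraction attached to a butterfly in section~\ref{sec:fract-determ-butt}. A butterfly $[S_\bullet,E,R_\bullet]$ produces crossed-bimodule morphisms
\begin{equation*}
  S_\bullet \xleftarrow{\ \sim\ } E_\bullet \lto R_\bullet,
\end{equation*}
where $E_\bullet\colon N\oplus_S E\to E$; by Lemma~\ref{lem:2} the left wing is a quasi-isomorphism, hence induces an equivalence of associated ring-like stacks $[E_\bullet]\sptilde\iso\sts$. Inverting it and composing with the morphism induced by the right wing gives $\Psi(E)\colon \sts\to\str$, a morphism of ring-like stacks because both wings are maps of crossed bimodules, hence of the associated strict multiplicative structures (section~\ref{sec:associated-stacks}). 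Checking $\Phi\circ\Psi\iso\id$ and $\Psi\circ\Phi\iso\id$ is then a matter of tracing the torsor description: the total space attached to $\Psi(E)$ recovers $E$ up to canonical ring isomorphism, while the split butterflies, forming the homotopy kernel of $p\colon\B(S_\bullet,R_\bullet)\to\catExtAlg(S,M)$ identified in section~\ref{sec:butt-extens} with $\catHom(S_\bullet,R_\bullet)$, match the strict morphisms, pinning the equivalence down on isomorphism classes and automorphisms.

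Finally, the stack statement $\shcatHom(\sts,\str)\iso\stb(S_\bullet,R_\bullet)$ I would obtain by running the constructions of $\Phi$ and $\Psi$ over each $U\in\Ob(\s)$, with $\sts|_U,\str|_U$ and $S_\bullet|_U,R_\bullet|_U$, noting that all the ingredients (total spaces, fiber products, trivializations) are universal constructions that commute with restriction. Since $\stb(S_\bullet,R_\bullet)$ is already known to be a stack (section~\ref{sec:bimodule-butterflies}) and $\shcatHom(\sts,\str)$ is a stack by descent for morphisms of stacks, the pointwise equivalences assemble, by $2$-descent, into the asserted equivalence of stacks.
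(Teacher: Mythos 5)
Your proposal is correct, and half of it coincides with the paper's own proof: your functor $\catHom(\sts,\str)\to\B(S_\bullet,R_\bullet)$ is the paper's $\Psi$, since your total space $E=\{(s,e)\,:\, e\in F(\pi_\sts(s))\}$ is canonically the fiber product $S\times_{\str}R$ used there (a point $e$ of the $(M,R)$-torsor $F(\pi_\sts(s))$ is the same datum as a triple $(s,\phi,r)$ with $\phi\colon F(\pi_\sts(s))\isoto\pi_\str(r)$ and $r$ the trivialization value of $e$), and your product via the monoidal coherence of $F$ is exactly the paper's $(s,\phi,r)\,(s',\phi',r')=(ss',\phi\phi',rr')$. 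Where you genuinely differ is the inverse direction. The paper does not invert the quasi-isomorphism wing of the fraction; it reuses the explicit group-like functor $\Phi$ of \cite{ButterfliesI}, sending $(Y,y)$ to the torsor of $N$-equivariant liftings $\shHom_N(Y,E)_y$, so the only new work is checking that $(e,e')\mapsto ee'$ makes $\Phi(E)$ multiplicative, using the identities~\eqref{eq:7}; the natural isomorphisms $\Phi\Psi\iso\id$ and $\Psi\Phi\iso\id$ are then inherited from the group-like theorem rather than re-proved. Your route through the span $\sts\xleftarrow{\ \sim\ }[N\oplus_S E\to E]\sptilde\lto\str$ is more conceptual---it makes the ``butterflies are fractions'' picture carry the argument---but it conceals two steps that must be supplied: (i) inverting the left wing requires the lemma that a strict morphism of crossed bimodules which is a quasi-isomorphism induces an equivalence of ring-like stacks admitting a quasi-inverse that is again ring-like (the stack analogue of the fact that a monoidal functor which is an equivalence of underlying categories has a monoidal quasi-inverse); and (ii) the verification that your two functors are mutually quasi-inverse, which you dispatch as ``tracing the torsor description,'' is precisely the work the paper's strategy avoids by restricting an equivalence already established in the group-like setting. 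Neither point is a fatal gap, but in your write-up they are where the real content sits, and both deserve explicit proofs.
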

This is the specialization to the context of ring-like stacks of the corresponding statements for Picard (or even just group-like) stacks proved in~\cite{ButterfliesI}. Indeed, forgetting the ring-like structures we get equivalences  
\begin{equation*}
  \catHom_k(\sts,\str) \iso \B_k(S_\bullet,R_\bullet)\quad\text{and}\quad
  \shcatHom_k (\sts,\str) \iso \stb_k (S_\bullet,R_\bullet).    
\end{equation*}
The necessary ingredient we will need is the construction of two mutually quasi-inverse functors $\Phi\colon \B_k(S_\bullet,R_\bullet)\to \Hom_k(\sts,\str)$ and $\Psi\colon \Hom_k(\sts,\str) \to \B_k(S_\bullet,R_\bullet)$.  We will recall some of the details of their definition from \loccit, then prove that they restrict to equivalences between $\catHom(\sts,\str)$ and $\B (S_\bullet, R_\bullet)$. Many of the ``moves'' in the new part of the proof would be a repeat of those already carried out in the original one, therefore we only sketch the main lines.

\numberwithin{equation}{subsection}%
\subsection{Recollections from~\protect{\cite[\S 4.3 and \S 4.4]{ButterfliesI}}}
\label{sec:recoll-bfl}

Throughout the proof we will use the equivalences $\sts\iso \tors(N,S)$ and $\str\iso \tors (M,R)$.  

Let $(Y,y)$ be an object of $\sts$.  Thus, $Y$ is an $N$-torsor equipped with an $N$-equivariant map $y\colon Y\to S$.  Let $E$ (by abuse of language) be a $k$-butterfly from $S_\bullet$ to $R_\bullet$.
First, define the $M$-torsor of local $N$-equivariant liftings of $y\colon Y\to S$ to $E$:
\begin{equation*}
  X \eqdef \shHom_N(Y,E)_y.
\end{equation*}
The $M$-action on $X$ takes the following form: if $\tilde y_0$ and $\tilde y_1$ are two different liftings defined over $U\in \Ob\s$, we have $\tilde y_0= \tilde y_1 + \imath (m)$, where, a priori, $m\colon Y\to M$. The $N$-equivariance of the lifts implies that $m$ is in fact $N$-\emph{invariant,} thus it only depends on the two lifts and not on the specific points of $Y$.  The torsor $X$ is equipped with the $M$-equivariant map  $x\colon X\to R$ defined by sending a (local) lift $\tilde y\colon Y\vert_U\to E$ to $\jmath\circ\tilde y$.  This is well defined: again, if $v,v+n$ are two points of $Y\vert_U$, with $n\in N\vert_U$, then
\begin{equation*}
  \tilde y (v+n) = \tilde y (v) + \kappa (n),
\end{equation*}
(see \loccit) so the post-composition with $\jmath$ does not depend on the specific point of $Y\vert_U$, but only on the lift itself.  Then, by definition, $\Phi(E)\colon \sts\to \str$ assigns to $(Y,y)$ the pair $(X,x)$ just defined.  It is clear that if $f\colon (Y',y')\to (Y,y)$ is a morphism of $(N,S)$-torsors, then we get a corresponding morphism $(X,x)\to (X',x')$.  Also, a morphism $\alpha\colon E\to E'$ of butterflies induces
\begin{equation*}
  \alpha_*(Y,y)\colon \shHom_N(Y,E)_y \lto \shHom_N(Y,E')_y,
\end{equation*}
and hence a morphism of functors
\begin{equation*}
  \alpha_* \colon \Phi(E) \Longrightarrow \Phi(E')\colon \sts\lto \str.
\end{equation*}
We refer to \loccit for details.

In the opposite direction, if $F\colon \sts\to \str$, then $E=\Psi (F)$ is the butterfly where:
\begin{equation*}
  E \eqdef S \times_{\str} R,
\end{equation*}
where the (stack) fiber product is computed with respect to the maps $\pi_\str\colon R\to \str$ and $F\circ \pi_\sts\colon S\to \str$.  Thus $E$ consists of triples $(s,\phi,r)$, where $s\in S$, $r\in R$, and $\phi\colon F(\pi_\sts(s))\to \pi_\str(r)$.  The maps $\pi\colon E\to S$ and $\jmath\colon E\to R$ are just the canonical projections to $S$ and $R$, respectively. The sequence $\xymatrix@1{M \ar[r]^\del & R \ar[r]^{\pi_\str} & \str}$ is homotopy exact, so its pullback along $F\circ \pi_\sts$ gives rise to the exact sequence $\xymatrix@1{M\ar[r]^\imath & E \ar[r]^\pi & S}$, the NE-SW diagonal of the butterfly.  The explicit form of the map $\imath\colon M\to E$ can be computed from the sequence: if $m\in M$, then we have:
\begin{equation*}
  \imath (m) = (0,\phi_m,\del m),
\end{equation*}
where $\phi_m\colon F(\pi_\sts(0)) = F(0_\sts) \to \pi_\str(\del m)$ is the composite  of the structural morphism $F(0_\sts) \to 0_\str$ with the (unique) isomorphism of torsors $\pi_\str(\del m)= (M,\del m) \isoto (M,0)=\pi_\str (0)= 0_\str$.  $\kappa$ can be defined along similar lines, bearing in mind that $\xymatrix@1{N\ar[r]^\del & S\ar[r]^{F\circ\pi_\sts} & \str}$ is only a complex, and so it will be its pullback along $\pi_\str$, giving rise to the NW-SE diagonal of the butterfly.  We refer to \loccit for further details on $\imath\colon M\to E$ and $\kappa\colon N\to E$ as well as the various functoriality properties.  

\subsection{Proof of Theorem~\ref{thm:2}}
\label{sec:proof}
  We show that, given a butterfly $E\in \B (S_\bullet,R_\bullet)$, the resulting morphism $\Phi(E)\colon \sts\to \str$ of Picard stacks is in fact ring-like by constructing isomorphisms
  \begin{equation}
    \label{eq:6}
    \Phi(E)(Y,y)\, \Phi(E)(Y',y') \lisoto \Phi(E)(YY',yy')
  \end{equation}
  satisfying the standard properties. If $(Y,y)$ and $(Y',y')$ are objects of $\sts$, we define the required isomorphism by sending two lifts $e\colon Y\to E$ and $e'\colon Y'\to E$ to the product $ee'$. This is well defined and compatible with the actions of $N$ on $Y$ and $Y'$, of $M$ on their images $X$ and $X'$, and with the definition of the product of torsors in sect.~\ref{sec:objects-products}. Indeed, for $v\in Y$, $v'\in Y'$, and $n,n'\in N$, we have:
  \begin{align*}
    e(v+n)\,e'(v'+n')
    & = e(v) \,e'(v') + e(v)\kappa(n') + \kappa (n)e'(v') + \kappa (n)\kappa(n') \\
    & = e(v) \,e'(v') + \kappa \bigl(
    \pi e (v)\, n' + n\,\pi e' (v') + n \, \pi\kappa(n') \bigr) \\
    & = e(v) \,e'(v') + \kappa  \bigl(
    y(v)\,n' + n\,y'(v') + n\,\del n' \bigr),
  \end{align*}
and we see the last line is just the equivariance of the lift $ee'$.  Similarly, for $m,m'\in M$, we have:
\begin{align*}
  ( e + \imath (m)) \, ( e' + \imath (m')) & = e\,e' + e\,\imath(m') + \imath(m)\,e' + \imath(m)\,\imath(m') \\
  & = e\, e' + \imath \bigl(
  \jmath\circ e\cdot m' + m\cdot \jmath\circ e' + m\cdot \jmath\circ \imath (m') \bigr) \\
  & = e\, e' + \imath \bigl(
  x(e)\, m' + m\, x'(e') + m\, \del m' \bigr).
\end{align*}
The verification that~\eqref{eq:6} is functorial and compatible with the associativity constraint follows the same steps as the proof in the group-like case of \loccit, and it is left to the reader.
  
Conversely, if $F\colon \sts\to \str$ is a morphism of ring-like stacks, then the resulting butterfly $E=\Psi(F)$ in $\B_k(S_\bullet, R_\bullet)$ actually satisfies the conditions in Definition~\ref{def:5}, with $E$ being equipped with a ring (or $k$-algebra) structure.

This is actually automatic, since $E=\Psi(F)=S\times_\str R$, so the pullback sequence
\begin{equation*}
  \xymatrix@1{%
    M \ar[r]^(.4)\imath & S\times_\str R \ar[r]^(.6)\pi & S
  }
\end{equation*}
comes naturally equipped with the structure of an algebra extension. Explicitly, the product in  $E$ reads:
\begin{equation*}
  (s,\phi,r)\,(s',\phi',r') \eqdef (ss', \phi\phi', rr'),
\end{equation*}
where $\phi\phi'$ stands for the composition:
\begin{equation*}
  F(\pi_\sts(s)\pi_\sts(s'))\iso F(\pi_\sts(s))F(\pi_\sts(s'))
  \xrightarrow{\phi\phi'} \pi_\str(r)\pi_\str(r')\iso \pi_\str(rr').
\end{equation*}
Associativity holds for the same reason it does for the sum operation in $E$.  Distributivity of the product with respect to the sum holds thanks to the fact that it (obviously) does in $S$ and $R$, and (weakly) in $\sts,\str$ and preserved by $F$.  For instance, for elements  $e_i=(s_i,\phi_i,r_i)$, for $i=0,1$ and $e=(s,\phi,r)$ of $E$, the equality $(e_0+e_1)e=e_0e + e_1e$ rests upon that of morphisms in $\str$
\begin{equation*}
  (\phi_0+\phi_1)\phi = \phi_0\phi + \phi_1\phi
\end{equation*}
(again, with shortened notation), which follows from the commutativity of structure diagrams as in~\cite[Definition 2.2]{MR2369166}.

It remains to prove that the butterfly satisfies the conditions~\eqref{eq:7}.
Let us pick just one of them, $\imath (m\, \jmath(e)) = \imath (m)\, e$.  Let $e=(s,\phi,r)$, as above. We have
\begin{equation*}
  \imath (m)\, e = (0,\phi_m,\del m)\, (s, \phi, r) = (0, \phi_m\phi, (\del m)r) = (0, \phi_m\phi, \del (mr)).
\end{equation*}
On the other hand, $\jmath (e) = r$, therefore
\begin{equation*}
  \imath (m\,\jmath(e)) = \imath (m\, r) = (0, \phi_{mr},\del( mr)).
\end{equation*}
Let $\eta$ be the structural isomorphism $F(0_\sts)\to 0_\str$. The commutativity of the diagram
\begin{equation*}
  \xymatrix{%
    F(0_\sts) \ar[d]_\eta \ar@{-}[r]^\sim \ar@(l,l)[dd]_{\phi_{mr}} & F(0_\sts) F(\pi_\sts(s))\ar[d]^{\eta\phi} \ar@(r,r)[dd]^{\phi_m\phi} \\
    0_\str \ar@{-}[r]^\sim & 0_\str \pi_\str(r) \\
    \pi_\str(\del (mr)) \ar[u]^{mr} \ar@{-}[r]^\sim & \pi_\str(\del m)\pi_\str(r) \ar[u]_{m\id}
  }
\end{equation*}
shows that, modulo the slight abuse of notation implied by omitting from it the standard isomorphisms, $\phi_{mr}=\phi_m\phi$, thereby implying the desired equality.  The remaining ones in~\eqref{eq:7} are treated similarly.\\ \hfill\qed

\section{Compositions of bimodule butterflies and the 2-stack of crossed bimodules}
\label{sec:comp-2stack-xbmod}

\subsection{Composition of butterflies}
\label{sec:composition}
Let $T_\bullet$, $S_\bullet$ and $R_\bullet$ be crossed bimodules.  We define a composition operation
\begin{equation*}
  \B (T_\bullet,S_\bullet) \times \B (S_\bullet, R_\bullet) \lto \B (T_\bullet, R_\bullet)
\end{equation*}
by restriction of the one for abelian sheaves defined in ref.\ in \cite{ButterfliesI}.  Consider the diagram
\begin{equation}
  \label{eq:9}
  \vcenter{%
    \xymatrix@-1pc{%
      P \ar[dd]_\del \ar[dr]^{\kappa'} && N \ar[dl]_{\imath'} \ar[dd]|\del \ar[dr]^\kappa && M \ar[dl]_\imath \ar[dd]^\del \\
      & F \ar[dl]_{\pi'} \ar[dr]^{\jmath'} & & E \ar[dl]_\pi \ar[dr]^\jmath \\
      T && S && R
    }} \quad =\quad
  \vcenter{%
    \xymatrix@-1pc{%
      P \ar[dd]_\del \ar[dr]^{\kappa''} && M \ar[dd]^\del \ar[dl]_{\imath''} \\
      & F\bigoplus^N_S E \ar[dl]_{\pi''} \ar[dr]^{\jmath''} \\
      T && R
    }
  }
\end{equation}
where $E$ is a butterfly from $S_\bullet$ to $R_\bullet$ and $F$ one from $T_\bullet$ to $S_\bullet$.  As an abelian sheaf, the object $F\oplus_S^NE$ is obtained as the cokernel of the monomorphism
\begin{equation}
  \label{eq:10}
  \xymatrix@1@C+1pc{N \ar[r]^(.4){(\imath',\kappa)} & F\bigoplus_S E}.
\end{equation}
It is proved in \loccit that the right hand side of~\eqref{eq:9} is in $\B_k(T_\bullet,R_\bullet)$, with $\pi''$ and $\jmath''$ being the obvious projections, whereas $\kappa''$ and $\imath''$ are induced by $(\kappa',0)$ and $(0,\imath)$, respectively.  In addition, $F\oplus_SE$ has an obvious algebra structure, and it is immediately seen that $N$ is an ideal via~\eqref{eq:10}. It is also easy to see the four morphisms $\kappa''$, $\imath''$, $\pi''$ and $\jmath''$ satisfy~\eqref{eq:7}, so the right hand side of~\eqref{eq:9} indeed is a bimodule butterfly. If $\beta\colon F'\to F$ and $\alpha\colon E'\to E$ are (iso)morphisms of butterfly, it is easily verified that there results a morphism
\begin{equation*}
  (\beta,\alpha)\colon F'\bigoplus^N_S E' \lto F\bigoplus^N_S E,
\end{equation*}
as a butterfly from $T_\bullet$ to $R_\bullet$.

This construction, analogously to the abelian sheaf case, can be sheafified over $\s$, so we obtain a composition law
\begin{equation*}
  \stb (T_\bullet,S_\bullet) \times \stb (S_\bullet, R_\bullet) \lto \stb (T_\bullet, R_\bullet).
\end{equation*}
In view of the equivalence of Theorem~\ref{thm:2}, we have
\begin{lemma}
  \label{lem:3}
  Let $\stt$, $\sts$ and $\str$ be the ring-like stacks corresponding to the above crossed bimodules. The composition law~\eqref{eq:9} is induced by that on ring-like stacks: 
  \begin{equation*}
    \shcatHom(\stt,\sts) \times \shcatHom(\sts,\str) \lto \shcatHom(\stt,\str)
  \end{equation*}
  via the equivalence of Theorem~\ref{thm:2}.
\end{lemma}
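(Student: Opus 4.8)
The plan is to transport the statement across the functor $\Phi$ (with quasi-inverse $\Psi$) constructed in the proof of Theorem~\ref{thm:2}, reducing the claim to a pointwise comparison of liftings. As both the butterfly composition of~\eqref{eq:9} and the composition of morphisms of ring-like stacks are obtained by the same stackification from their pointwise counterparts, it suffices to prove the corresponding statement at the level of the groupoids, namely to produce for fixed butterflies $F\in\B(T_\bullet,S_\bullet)$ and $E\in\B(S_\bullet,R_\bullet)$ a natural isomorphism
\begin{equation*}
  \Phi\bigl(F\bigoplus^N_S E\bigr)\iso \Phi(E)\circ\Phi(F)\colon \stt\lto\str
\end{equation*}
of \emph{ring-like} morphisms; the stack version then follows formally.

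The underlying isomorphism of Picard-stack morphisms is exactly the compatibility of $\Phi$ with composition established in~\cite{ButterfliesI}. Explicitly, writing $(Y,y)=\Phi(F)(Z,z)$ for $(Z,z)\in\stt$, so that $Y=\shHom_P(Z,F)_z$, a local $N$-equivariant lift $\tilde y\colon Y\to E$ of $y$ is sent to the $P$-equivariant lift $\hat z\colon Z\to F\bigoplus^N_S E$ defined by $\hat z(w)=[\tilde z(w),\tilde y(\tilde z)]$, where $\tilde z$ is any local lift of $z$ to $F$. This is independent of $\tilde z$ precisely because replacing $\tilde z$ by $\tilde z+\imath'(n)$ changes $(\tilde z(w),\tilde y(\tilde z))$ by $(\imath'(n),\kappa(n))$, the image of $N$ under~\eqref{eq:10}; it thus furnishes the required identification $\shHom_N(Y,E)_y\iso\shHom_P(Z,F\bigoplus^N_S E)_z$. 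The only point left is that this identification intertwines the multiplicativity constraints~\eqref{eq:6} carried by each side by virtue of Theorem~\ref{thm:2}.

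To see this I would evaluate both constraints on a pair of objects $(Z,z),(Z',z')$ and their local lifts. The constraint for a single butterfly sends a pair of lifts to their product in the centerpiece, so on $\Phi(E)\circ\Phi(F)$ it multiplies lifts first in $F$ and then in $E$, producing $[\tilde z(w)\tilde z'(w'),\,\tilde y(\tilde z)\tilde y'(\tilde z')]$, whereas on $\Phi(F\bigoplus^N_S E)$ it multiplies the lifts $\hat z,\hat z'$ directly in $F\bigoplus^N_S E$. Since the ring structure on $F\oplus_S E$ is componentwise and descends to the quotient by the ideal $N$, this direct product is $[\tilde z(w)\tilde z'(w'),\,\tilde y(\tilde z)\tilde y'(\tilde z')]$ as well. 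The two constraints therefore agree, and coherence with the unit and associativity constraints is checked in the same manner by tracking the structural isomorphisms, a routine verification I would leave to the reader.

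The main obstacle is purely organizational: one must keep the $P$-, $N$- and $M$-equivariance conditions together with the two sets of relations~\eqref{eq:7} for $F$ and for $E$ in order, and confirm that the componentwise product on $F\oplus_S E$ is well defined on the quotient and coincides with the iterated product of lifts. Granting the additive compatibility of~\cite{ButterfliesI}, no essentially new difficulty arises, the verification being the multiplicative shadow of an identity already established for the underlying group-like structures; the final passage to the stack statement $\shcatHom(\stt,\sts)\times\shcatHom(\sts,\str)\to\shcatHom(\stt,\str)$ is then formal.
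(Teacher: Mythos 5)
Your proposal is correct, but it runs the equivalence of Theorem~\ref{thm:2} in the opposite direction from the paper, and the two computations look quite different. You work with $\Phi$: for fixed butterflies $F$ and $E$ you build an explicit isomorphism $\Phi(F\oplus^N_S E)\iso \Phi(E)\circ\Phi(F)$ at the level of torsors of liftings, via $\hat z(w)=[\tilde z(w),\tilde y(\tilde z)]$ (your well-definedness check against the image of~\eqref{eq:10}, and the $M$-equivariance, are exactly right), and then you match the multiplicative constraints using the fact that the ring structure on $F\oplus_S E$ is componentwise and descends to the quotient by the ideal $N$. The paper instead works with $\Psi$: given morphisms $B\colon \stt\to\sts$ and $A\colon\sts\to\str$ with $\Psi(B)=T\oplus_\sts S$ and $\Psi(A)=S\oplus_\str R$, it computes
\begin{equation*}
  F\bigoplus_S E = F\bigoplus_S \bigl(S\bigoplus_\str R\bigr) \iso F\bigoplus_\str R,
\end{equation*}
observes that the monomorphism~\eqref{eq:10} becomes $(\imath',0)$, and concludes that its cokernel is $T\oplus_\str R = \Psi(A\circ B)$ by exactness of $N\to F\to T$; no torsor sections appear, and the ring isomorphism $F\oplus^N_S E\iso T\oplus_\str R$ drops out directly. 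Since $\Phi$ and $\Psi$ are mutually quasi-inverse, proving compatibility of composition for either one yields the lemma, so both routes are legitimate. The paper's route buys brevity and a purely algebraic argument; yours buys an explicit pointwise description of the comparison isomorphism and makes visible precisely where the conditions~\eqref{eq:7} and the ideal property of $N$ in $F\oplus_S E$ are used. Note also that you and the paper defer comparable things to the reader: you leave the unit/associativity coherence and functoriality in $(F,E)$ unchecked, while the paper leaves the compatibility with 2-morphisms $\alpha*\beta$ unchecked; the final passage to the stack statement is formal in both treatments.
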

\begin{proof}
  Let $B\colon \stt\to \sts$ and $A\colon \sts\to \str$ be two morphisms. Let $F$ and $E$ be the corresponding butterflies.  We prove that the butterfly determined by $A\circ B$ is isomorphic to $F\oplus^N_S E$.

  From the proof of Theorem~\ref{thm:2} we have $E=S\oplus_\str\! R$ and $F=T\oplus_\sts\! S$.  Then
  \begin{equation*}
    F\bigoplus_S E = F\bigoplus_S \bigl( S\bigoplus_\str R \bigr)
    \iso F\bigoplus_\str R.
  \end{equation*}
  As a consequence, the morphism~\eqref{eq:10} equals
  \begin{equation*}
    \xymatrix@1@C+1pc{N \ar[r]^(.4){(\imath',0)} & F\bigoplus_\str R}
  \end{equation*}
  and its cokernel is therefore $T\oplus_\str\! R$, since
  \begin{equation*}
    \xymatrix@1{N \ar[r]^{\imath'} & F \ar[r]^{\pi'} & T}
  \end{equation*}
  is exact.  But $T\oplus_\str\! R$  is the center element of the butterfly determined by $A\circ B$, as wanted. Tracing the various steps shows that $F\oplus^N_SE\iso T\oplus_\str\!R$ is a ring isomorphism. We leave to the reader the tedious but straightforward verification that the above constructions are compatible with morphisms, namely that 2-morphisms $\alpha\colon A' \Rightarrow A \colon \sts\to \str$ and $\beta\colon B'\Rightarrow B\colon \stt\to \sts$ give rise to a morphism $F'\oplus^N_S E'\to F\oplus^N_SE$ of butterflies as above corresponding to $\alpha*\beta\colon A'\circ B' \Rightarrow A\circ B$.
\end{proof}

\subsection{The 2-stack of crossed bimodules}
\label{sec:2-stack-xbimod}

Let $\xbm (\s)$ the bicategory whose objects are crossed bimodules over $\s$.  The category (in fact, groupoid) of morphisms from the crossed bimodule $S_\bullet$ to $R_\bullet$ is the groupoid of butterflies $\B(S_\bullet,R_\bullet)$: since the composition~\eqref{eq:9} is obtained from the fiber product construction of the butterfly applied to the composite $\stt\to \sts\to \str$, the composition of butterflies is only associative up to isomorphism.  

As a consequence of Theorem~\ref{thm:1} and Theorem~\ref{thm:2} we have:
\begin{theorem}
  \label{thm:4}
  $\xbm (\s)$ is equivalent to the 2-category of ring-like stacks, \ie the 2-category of monoids in $\pic (\s)$. \qed
\end{theorem}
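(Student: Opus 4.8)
The plan is to assemble Theorems~\ref{thm:1} and~\ref{thm:2} together with Lemma~\ref{lem:3} into a single biequivalence of bicategories, exactly as in the group-like case of~\cite{ButterfliesI}. First I would define a pseudofunctor
\[
  \mathbf{F}\colon \xbm(\s)\lto \cat{Ring}(\s),
\]
where $\cat{Ring}(\s)$ denotes the $2$-category of ring-like stacks, i.e.\ of monoids in $\pic(\s)$. On objects, $\mathbf{F}$ sends a crossed bimodule $S_\bullet$ to its associated ring-like stack $\sts = [N\to S]\sptilde$ constructed in section~\ref{sec:associated-stacks}. On hom-groupoids, $\mathbf{F}$ is the functor $\Phi\colon \B(S_\bullet,R_\bullet)\to \catHom(\sts,\str)$ recalled in section~\ref{sec:recoll-bfl}: a butterfly $E$ is carried to the ring-like morphism $(Y,y)\mapsto (X,x)$ with $X=\shHom_N(Y,E)_y$ and $x=\jmath\circ(-)$, and a morphism of butterflies to the induced natural transformation. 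By the proof of Theorem~\ref{thm:2} this is indeed a morphism of ring-like stacks, and not merely of the underlying Picard stacks.

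The pseudofunctoriality data are already in hand. The compositor
\[
  \mathbf{F}(E)\circ \mathbf{F}(F)\;\lisoto\;\mathbf{F}\bigl(F\bigoplus^N_S E\bigr)
\]
is precisely the isomorphism supplied by Lemma~\ref{lem:3}, which identifies the butterfly attached to a composite morphism of ring-like stacks with the composite butterfly $F\bigoplus^N_S E$ of section~\ref{sec:composition}. The unitor at $S_\bullet$ is read off from the split butterfly attached to the identity morphism of crossed bimodules (section~\ref{sec:strict-morphism}), which $\Phi$ sends to a morphism canonically isomorphic to $\id_\sts$. With these constraints in place, $\mathbf{F}$ is a pseudofunctor once the pentagon and triangle coherence axioms are checked.

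To conclude, I would invoke the standard criterion that a pseudofunctor which is essentially surjective on objects (in the bicategorical sense) and a local equivalence—an equivalence on every hom-groupoid—is a biequivalence. Essential surjectivity on objects is exactly Theorem~\ref{thm:1}: every ring-like stack $\str$ is equivalent to one of the form $[M\to R]\sptilde=\mathbf{F}(R_\bullet)$, hence lies in the essential image of $\mathbf{F}$. The local equivalence is the first assertion of Theorem~\ref{thm:2}, namely $\B(S_\bullet,R_\bullet)\iso\catHom(\sts,\str)$, witnessed by the mutually quasi-inverse $\Phi$ and $\Psi$. These two facts yield the asserted biequivalence $\xbm(\s)\iso \cat{Ring}(\s)$.

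The step I expect to be the genuine obstacle is the coherence verification for $\mathbf{F}$: as noted in section~\ref{sec:2-stack-xbimod}, the composition of butterflies is built from fibre products and is therefore associative only up to isomorphism, so one must confront its weak associativity with the composition in $\cat{Ring}(\s)$. Rather than grind through the pentagon directly, the efficient route is to exploit that the target composition is honest composition of morphisms of stacks, hence coherent, and that the compositor above arises, via Lemma~\ref{lem:3}, as the $\Phi$-image of canonical identifications of iterated fibre-product centerpieces such as $F\bigoplus_S\bigl(S\bigoplus_\str R\bigr)\iso F\bigoplus_\str R$. Since $\Phi$ is an equivalence intertwining the two composition laws, the coherence isomorphisms for $\xbm(\s)$ are forced to be the transports of the (strictly coherent) ones of $\cat{Ring}(\s)$, and the axioms then hold automatically; the only residual labor is the elementary diagram chase showing these canonical fibre-product identifications are themselves associative and unital, which is entirely parallel to~\cite{ButterfliesI}.
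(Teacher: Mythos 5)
Your proposal is correct and follows exactly the route the paper takes: the paper states Theorem~\ref{thm:4} as an immediate consequence of Theorem~\ref{thm:1} (essential surjectivity on objects), Theorem~\ref{thm:2} (equivalence on hom-groupoids via $\Phi$ and $\Psi$), and Lemma~\ref{lem:3} (compatibility of butterfly composition with composition of ring-like morphisms), with the associativity constraints of $\xbm(\s)$ defined, as you observe, by transport from the strictly associative composition on the stack side. Your expansion of the coherence argument fills in details the paper leaves implicit, but the decomposition and the key ingredients are the same.
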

This is the specialization of a similar equivalence holding for the corresponding larger 2-categories of complexes and Picard stacks. More precisely, We have a (faithful) forgetful functor $\xbm (\s)\to \ch (\s)$, where the latter is  the bicategory of length 1-complexes of abelian sheaves over $\s$, equipped with butterflies $\B_k(-,-)$ as morphism groupoids.  Similarly, we can consider the whole 2-category of Picard stacks, $\pic (\s)$.  Then we have an equivalence of bicategories
\begin{equation*}
 \ch (\s) \iso \pic (\s). 
\end{equation*}
This equivalence can actually be sheafified over $\s$ to yield an equivalence of 2-stacks
\begin{equation*}
  \CH (\s) \lisoto \PIC (\s),
\end{equation*}
see \cite[Thm.\ 8.5.2 and Prop.\ 8.5.4]{ButterfliesI}. Note that $\CH (\s)$ is a 2-stack in a weaker sense, as it is fibered in bicategories.  The notable point is that complexes generally satisfy 2-descent with respect to butterflies, \ie weak morphisms, and not only strict ones.  The 2-descent arguments used in \loccit can be carried over the present situation. So we have:
\begin{theorem}
  \label{thm:3}
  The fibered bicategory defined by $U \to \xbm (\s/U)$, $U\in \Ob (\s)$, is a 2-stack $\XBM (\s)$.  Moreover, there is an equivalence with the 2-stack of monoids in $\PIC (\s)$. \qed
\end{theorem}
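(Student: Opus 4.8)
The plan is to deduce both assertions from the equivalence of 2-stacks $\CH(\s)\lisoto\PIC(\s)$ established in \cite{ButterfliesI} by adding the ring (monoid) structure on both sides, and to reduce the 2-descent for $\XBM(\s)$ to the already-known 2-descent for $\PIC(\s)$ via a fiberwise equivalence. In outline: I would first show that the monoids in $\PIC(\s)$ form a 2-stack, then exhibit the associated-stack construction as a fiberwise equivalence from $\XBM(\s)$ onto it, and finally invoke invariance of the 2-stack property under equivalence of fibered bicategories.

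First I would verify that the ring-like stacks, i.e.\ the monoids in $\PIC(\s)$, assemble into a 2-stack, to be denoted $\mathrm{Mon}(\PIC(\s))$. Since $\PIC(\s)$ is a 2-stack and the tensor product $\otimes$ is defined 2-stack-locally through the derived tensor product on presentations, the data of a monoid structure — the multiplication $m\colon\str\otimes\str\to\str$, the unit object $I$, and the associativity and unit coherence 2-morphisms — are morphisms and 2-morphisms in $\PIC$ that are themselves subject to descent. Concretely, a monoid over $U$ is an object of $\PIC(\s/U)$ together with restriction-compatible multiplicative data, so descent data for monoids split into descent data for the underlying Picard stacks plus descent data for their multiplicative structure; the former glue because $\PIC$ is a 2-stack, and the latter because the $\shcatHom$-stacks and the sheaves of 2-morphisms of $\PIC$ satisfy descent.

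Next I would package section~\ref{sec:associated-stacks} together with Theorem~\ref{thm:4} into a morphism of fibered bicategories $\XBM(\s)\to\mathrm{Mon}(\PIC(\s))$, sending a crossed bimodule over $\s/U$ to its associated ring-like stack. This construction commutes with the restriction functors, so it is a genuine morphism of fibered bicategories, and by Lemma~\ref{lem:3} it respects composition of butterflies. Over each $U$ it is an equivalence of bicategories: it is essentially surjective by Theorem~\ref{thm:1}, and Theorem~\ref{thm:2} in its stack form $\shcatHom(\sts,\str)\iso\stb(S_\bullet,R_\bullet)$ identifies the $\hom$-groupoids, the sheaves of 2-morphisms matching by the same token. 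Thus we obtain a fiberwise equivalence whose target is a 2-stack.

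Finally I would invoke the formal fact that a morphism of fibered bicategories which is a fiberwise equivalence is an equivalence of fibered bicategories, and that being a 2-stack is invariant under such equivalence: descent data in $\XBM$ transport to descent data in $\mathrm{Mon}(\PIC)$, which glue, and the result lifts back by essential surjectivity of the fiber functor, with uniqueness governed by the equivalence on $\hom$-groupoids and on 2-morphisms. This simultaneously shows that $U\mapsto\xbm(\s/U)$ is the 2-stack $\XBM(\s)$ and yields the asserted equivalence with the monoids in $\PIC(\s)$. The main obstacle is the first step — confirming that the monoid layer genuinely satisfies 2-descent on top of $\PIC$, i.e.\ that the multiplication morphisms and their coherence 2-cells glue. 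This is the ring-like analogue of the 2-descent arguments of \cite{ButterfliesI}; since the butterfly descent data are automatically multiplicative by Theorem~\ref{thm:2}, the verification amounts to transporting those arguments along the correspondences complex $\leftrightarrow$ crossed bimodule, butterfly $\leftrightarrow$ bimodule butterfly, and Picard stack $\leftrightarrow$ ring-like stack, which is routine but requires careful bookkeeping of the coherence data.
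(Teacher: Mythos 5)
Your proposal is correct and is essentially the paper's own argument: the paper's (very terse) proof consists of carrying over the 2-descent arguments of \cite[Thm.~8.5.2, Prop.~8.5.4]{ButterfliesI}, which amount to exactly your reduction—pass from 2-descent data for crossed bimodules to 2-descent data for the associated ring-like stacks via the fiberwise equivalence furnished by Theorems~\ref{thm:1}, \ref{thm:2} and~\ref{thm:4}, glue there, and lift the result back through a presentation. The only step you spell out that the paper leaves implicit in the phrase ``the 2-stack of monoids in $\PIC(\s)$'' is the verification that monoids in $\PIC(\s)$ themselves satisfy 2-descent, and your argument for it (underlying Picard stacks glue because $\PIC(\s)$ is a 2-stack; the multiplicative data and coherence 2-cells glue because the $\shcatHom$-stacks and the sheaves of 2-morphisms satisfy descent) is the intended one.
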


\section{The Shukla, Barr, André-Quillen cohomology class of a ring-like stack}
\label{sec:shukla}

One of the main points of refs.~\cite{MR2065328,MR2458668,MR2760344,MR3085798} is that (regular) $\mathit{Ann}$-categories with  given $\pi_0=A$ and $\pi_1=M$ are classified  up to equivalence by the third Shukla cohomology $\SH^3(A,M)$. In this last section, we briefly show that this is the case for the ring-like stacks of this paper as well, thereby providing another justification to our claim that ring-like stacks \emph{are} $\mathit{Ann}$-categories. We will be sketchy, as most of the material is well known.

\subsection{Cohomology or rings}
\label{sec:H-of-rings}

Let $k$ be a commutative ring, $A$ a $k$-algebra, and $X$ an $A$-bimodule. Let $\kalg$ be the category of $k$-algebras, and $\kalg/A$ that of $k$-algebras over $A$. After \cite{MR0257068} and \cite{MR0218424} (see also the accounts in \cite{MR1410176,MR0258917}), the third Shukla cohomology of $A$ with values in $X$ is
\begin{equation*}
  \SH^3(A/k,X) \iso \DerD^2_{k}(A,X),
\end{equation*}
where $\DerD^q_{k}(A,X)$ is the $q^{\mathrm{th}}$ derived functor of $A\rightsquigarrow \Der_k(A,X)$. More generally, for every object $C$ of $\kalg/A$, $\Der_k(C,X)$ consists of $k$-linear maps $D\colon C\to X$ such that $D(cc')=u(c)D(c')+D(c)u(c')$, where $u\colon C\to A$ is the structure map. We have
\begin{equation*}
  \Der_k(C,X) \iso \Hom_{A\otimes A^\op}((A\otimes A^\op)\otimes_{C\otimes C^\op} D_{C/k},X)\iso \Hom_{\kalg/B}(C,I_A(X)),
\end{equation*}
where
\begin{equation*}
  D_{C/k} = \ker (\mu\colon C\otimes_k C \lto C),
\end{equation*}
is the bimodule of differentials and $I_A(X)$ is the algebra of $X$-dual numbers over $A$.  Thus we have $\SH^3(A/k,X)\iso H^2(\Hom_{\kalg/A}(Y_\bullet,I_A(X)))$, where $Y_\bullet$ is a resolution of $B$ computed according one of the methods described in  \loccit

Alternatively, the associative algebra cohomology can be computed by way of an suitable model structure on the category of non-negatively graded chain differential graded algebras, as in \cite{MR2270566}. In this formulation, the element of $\SH^3(A/k,X)$ classifying the crossed extension
\begin{equation}
  \label{eq:12}
    \xymatrix@1{%
    0 \ar[r] & X\ar[r] & M \ar[r]^\del & R \ar[r] & A \ar[r] & 0},
\end{equation}
and by consequence that of the $\mathit{Ann}$-category $[M\to R]\sptilde$ associated to it, is computed from a cofibrant resolution of $A$ and the diagram
\begin{equation}
  \label{eq:13}
  \vcenter{%
    \xymatrix{%
      \cdots \ar[r] &
      A_3 \ar[r] \ar[d] & A_2\ar[r] \ar[d]^f & A_1 \ar[r] \ar[d] & A_0 \ar[r] \ar[d] & A \ar[r] \ar@{=}[d]& 0 \\
      & 0 \ar[r] & X\ar[r] & M \ar[r]^\del & R \ar[r] & A \ar[r] & 0
    }}
\end{equation}
The element $f$ in the above diagram is a derivation whose class is the element of $\SH^3(A/k,X)$ in question. 

\subsection{Functorial behavior}
\label{sec:functorial}

Consider a morphism
\begin{equation*}
  F\colon [N\lto S]\sptilde \lto [M\lto R]\sptilde
\end{equation*}
of $\mathit{Ann}$-categories. From Theorem~\ref{thm:2} we obtain a diagram with exact rows featuring the butterfly representation of $F$:
\begin{equation}
  \label{eq:14}
  \vcenter{\xymatrix@R-0.5pc{%
      0 \ar[r] & Y \ar[dd]^{\eta} \ar[r] & N \ar[dr]_\kappa \ar[rr]^\del && S \ar[r] & B \ar[dd]^{\xi} \ar[r] & 0\phantom{\,.} \\
      &         & & E \ar[ur]_\pi \ar[dr]^\jmath \\
      0 \ar[r] & X\ar[r] & M \ar[ur]^\imath \ar[rr]_\del && R \ar[r] & A \ar[r] & 0\,.
    }}
\end{equation}
From its properties we can readily obtain the morphisms $\xi$ and $\eta$; the former is in $\kalg$, the latter is a morphism of left $B\otimes B^\op$-modules, upon restricting scalars for $X$.

Let $y$ and $x$ denote the crossed extensions defined by the top and bottom rows of~\eqref{eq:14}, respectively, and denote by $[y]$ and $[x]$ the equivalence classes they determine.
\begin{proposition}
  \label{prop:1}
  The diagram~\eqref{eq:14} induces
  \begin{equation*}
    \xymatrix{%
      \SH^3(B/k,Y) \ar[r]^{\eta_*} & \SH^3(B/k,X)\phantom{\,,} \\
      & \SH^3(A/k,X) \ar[u]_{\xi^*}\,,}
  \end{equation*}
  such that $\eta_*([y]) = \xi^*([x])$.
\end{proposition}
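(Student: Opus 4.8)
The plan is to read the proposition as an instance of the naturality of the characteristic class under morphisms of crossed extensions, using the butterfly centerpiece $E$ to replace the non-strict data of~\eqref{eq:14} by an honest strict morphism. First I would fix where the two classes live and why the equation is meaningful. Since $\xi\colon B\to A$ is a $k$-algebra map, restriction of scalars along $\xi$ makes $X$ a $B$-bimodule, and then $\eta\colon Y\to X$ is a morphism of $B$-bimodules. Consequently the coefficient map $\eta_*\colon \SH^3(B/k,Y)\to \SH^3(B/k,X)$ and the base-change map $\xi^*\colon \SH^3(A/k,X)\to \SH^3(B/k,X)$ both take values in $\SH^3(B/k,X)$, under the identification $\SH^3(-/k,-)\iso\DerD^2_k(-,-)$ recalled in sect.~\ref{sec:H-of-rings}. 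Concretely, via~\eqref{eq:13}, $[x]$ is represented by a derivation $f\colon A_2\to X$ extracted from a cofibrant resolution $A_\bullet\qi A$, and $[y]$ by a derivation $g\colon B_2\to Y$ from a resolution $B_\bullet\qi B$.

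The key step is to trade $S_\bullet$ for a quasi-isomorphic crossed bimodule manufactured from the butterfly. By the fraction construction of sect.~\ref{sec:fract-determ-butt} and Lemma~\ref{lem:2}, the centerpiece $E$ produces a crossed bimodule $E_\bullet\colon N\oplus_S E\to E$ together with a strict quasi-isomorphism $E_\bullet\lisoto S_\bullet$ (the left wing, $\tilde\pi$ in degree one and $\pi$ in degree zero, inducing the identity on $\pi_0=B$ and $\pi_1=Y$) and a strict morphism $E_\bullet\to R_\bullet$ (the right wing, $\tilde{\jmath}$ in degree one and $\jmath$ in degree zero). A direct computation on homotopy objects—using $\del(n,e)=e$ on $N\oplus_S E$ and $\jmath\kappa=0$—shows that the right wing induces exactly $\xi$ on $\pi_0$ and $\eta$ on $\pi_1$, matching the maps of~\eqref{eq:14}. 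In this way~\eqref{eq:14} is repackaged as a genuine strict morphism of crossed extensions
\[
(\,0\to Y\to N\oplus_S E\to E\to B\to 0\,)\ \lto\ (\,0\to X\to M\to R\to A\to 0\,)
\]
covering $(\xi,\eta)$, while the quasi-isomorphism guarantees $[E_\bullet]=[y]$ in $\SH^3(B/k,Y)$.

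It then remains to establish the functoriality statement in its clean form: a strict morphism of crossed extensions covering $(\xi,\eta)$ induces $\eta_*[\text{source}]=\xi^*[\text{target}]$ on classes. I would prove this on resolutions. Lift $\xi$ to a map of cofibrant resolutions $\Xi\colon B_\bullet\to A_\bullet$ over $\xi$ (possible by cofibrancy of $B_\bullet$), so that $\xi^*[x]$ is represented by $f\circ\Xi_2$ and $\eta_*[y]$ by $\eta\circ g$. The point is that $B_\bullet\xrightarrow{\Xi}A_\bullet\to x$ and $B_\bullet\to y\to x$ are two maps of the relevant type into the crossed extension $x$, both covering the same $\xi\colon B\to A$; since $B_\bullet$ is cofibrant and $x$ is quasi-isomorphic to $A$, these lifts are chain homotopic, and the comparison homotopy exhibits $f\circ\Xi_2-\eta\circ g$ as a coboundary in $\Hom_{\kalg/B}(B_\bullet,I_B(X))$. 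Crucially, the second factorization $B_\bullet\to y\to x$ exists only because of the butterfly: the morphism $y\to x$ is precisely the strict morphism of crossed extensions produced above, and $E$ is the filler that makes it compatible. Equivalently, one may argue entirely inside crossed extensions by the standard pushout–pullback manipulation, exhibiting $\eta_*(\text{source})$ and $\xi^*(\text{target})$ as equivalent crossed extensions of $B$ by $X$ directly from the morphism.

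The only genuine obstacle is the construction of this comparison homotopy—i.e.\ verifying that $f\circ\Xi_2-\eta\circ g$ is a coboundary—since everything else is bookkeeping with the identifications of sect.~\ref{sec:H-of-rings} and the homotopy-object computation for the wings of $E$. Because the material is by design sketchy, I would state the functoriality lemma, indicate the resolution-lifting argument and the role of $E$ as the filler, and defer the explicit cochain homotopy to the reader, exactly as is done for the analogous naturality of the class in the group-like case.
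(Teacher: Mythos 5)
Your proposal is correct and follows essentially the same route as the paper: both replace $S_\bullet$ by the fraction $E_\bullet = [N\oplus_S E \to E]$ of sect.~\ref{sec:fract-determ-butt} and Lemma~\ref{lem:2}, use the left wing (a trivial fibration) to identify the class of the resulting crossed extension with $[y]$, and then compare the right wing $E_\bullet\to R_\bullet$ against a lift of $\xi$ between cofibrant resolutions. The only immaterial difference is the final comparison: the paper chooses the lift $\tilde f\colon B'_\bullet\to A_\bullet$ by the lifting axiom against the trivial fibration $A_\bullet\to R'_\bullet$, so the two representing derivations agree on the nose, whereas you take an arbitrary lift of $\xi$ and deduce equality of classes from uniqueness of such lifts up to homotopy.
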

\begin{proof}
  Let us use the notation $S_\bullet = [N\to S]$, $E_\bullet=[N\oplus_SE\to E]$, and $R_\bullet = [M\to R]$, where $\bullet=0,1$. Let us put primes for the corresponding DGAs, as in~\ref{ex:3}: thus $S'_\bullet = \cdots \to 0\to Y\to N\to S$, and so on.

By sect.~\ref{sec:fract-determ-butt} and Lemma~\ref{lem:2}, we have a quasi-isomorphism $p_\bullet\colon E_\bullet \to S_\bullet$, with $p_1=\tilde\pi$ and  $p_0=\pi$. In fact $p_\bullet$ induces the identity on $Y=\pi_1(E_\bullet)=\pi_1(S_\bullet)$ and $B=\pi_0(E_\bullet)=\pi_0(S_\bullet)$. Recall that $\pi$ is an epimorphism.

  Let $B_\bullet \colon \cdots \to B_2\to B_1 \to B_0$ be a cofibrant replacement of $B$ fitting in a diagram like~\eqref{eq:13}, and let $\delta\colon B_2\to Y$ represent the class of the crossed extension $y = Y\to [N\to S]\to B$ in $\SH^3(B/k,Y)$.

  The crossed bimodule $E_\bullet$ determines a crossed extension $y'=Y\to E_\bullet \to B$ whose class in $\SH^3(B/k,Y)$ is the same as that of $y$ since $p_\bullet\colon E_\bullet\to S_\bullet$ (and hence the corresponding map of DGAs) is a trivial fibration \cite{MR2270566}; indeed, the map $B_\bullet \to S'_\bullet$ lifts to $B_\bullet\to E'_\bullet$, and if $B'_\bullet\to E'_\bullet$ is a cofibrant replacement, so is the composition $B'_\bullet \to E'_\bullet\to S'_\bullet$, resulting in an isomorphims $B'_\bullet\iso B_\bullet$. Let $\delta'\colon B'_2\to Y$ be the derivation corresponding to $\delta$ which represents the class of $y'$.

  We can now proceed by using $E_\bullet$ in place of $S_\bullet$, and consider the morphism $f_\bullet\colon E_\bullet\to R_\bullet$ which also represents $F$. The following diagram has a filling
  \begin{equation*}
    \xymatrix{%
      B'_\bullet \ar@{->>}[d]^\wr \ar[dr] \ar@{.>}[r]^{\tilde f} & A_\bullet \ar@{->>}[d]^\wr \\
      E'_\bullet \ar[r]_f & R'_\bullet
    }
  \end{equation*}
  since $B'_\bullet$ is cofibrant. If $D\colon A_2\to X$ represents the class of the crossed extension $x=X \to [M\to R]\to A$ in $\SH^3(A/k,X)$ we have $D\circ \tilde f_2 = f_2\circ \delta$. Since $f_2$ is a bimodule map, it follows that
  \begin{equation*}
    [f_2\circ\delta]\in \SH^3(B/k,X).
  \end{equation*}
  Since $\tilde f$ covers $\xi\colon B\to A$, we obtain the statement.
\end{proof}
\begin{proposition}
  \label{prop:2}
  The diagram in Proposition~\ref{prop:1} depends only on the isomorphism class of $F$ (or of its representative $E$).
\end{proposition}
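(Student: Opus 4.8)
The plan is to show that the cohomological data extracted in Proposition~\ref{prop:1}---the two maps $\eta_*$ and $\xi^*$ together with the equality $\eta_*([y])=\xi^*([x])$---are unaffected if the butterfly $E$ is replaced by an isomorphic one $E'$, and hence depend only on the isomorphism class of the morphism $F$ of $\mathit{Ann}$-categories represented by $E$. First I would recall that a morphism $\alpha\colon E\isoto E'$ in $\B(S_\bullet,R_\bullet)$ is, by Definition~\ref{def:6}, a ring isomorphism commuting with all four structural maps $\kappa,\imath,\pi,\jmath$. The key observation is that $\xi$ and $\eta$ in~\eqref{eq:14} are determined entirely by the outer data of the butterfly---$\xi=\pi_0(F)$ on $B=\pi_0(S_\bullet)\to A=\pi_0(R_\bullet)$ and $\eta=\pi_1(F)$ on $Y=\pi_1(S_\bullet)\to X=\pi_1(R_\bullet)$---and by Theorem~\ref{thm:2} and Lemma~\ref{lem:2} these induced maps on homotopy objects are invariant under isomorphism of butterflies. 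Thus the two cohomology homomorphisms $\eta_*$ and $\xi^*$ in the statement of Proposition~\ref{prop:1} are literally unchanged.

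Next I would verify that the common value $\eta_*([y])=\xi^*([x])$ is also invariant. The class $[x]\in\SH^3(A/k,X)$ is an intrinsic invariant of the bottom crossed extension of~\eqref{eq:14}, which does not involve $E$ at all, so it is manifestly unchanged. Likewise $[y]\in\SH^3(B/k,Y)$ is the class of the top crossed extension, again independent of $E$. The only place $E$ enters the proof of Proposition~\ref{prop:1} is through the intermediate crossed bimodule $E_\bullet=[N\oplus_SE\to E]$ and the derivation $f$ realizing $F$ on the level of DGAs. The crux is therefore to check that an isomorphism $\alpha\colon E\to E'$ induces a compatible isomorphism $E_\bullet\isoto E'_\bullet$ of the associated crossed bimodules---indeed $\alpha$ and its lift $N\oplus_SE\to N\oplus_SE'$, $(n,e)\mapsto(n,\alpha e)$, form a strict morphism of crossed bimodules by the compatibility conditions~\eqref{eq:7}---and that this isomorphism intertwines the respective quasi-isomorphisms $p_\bullet$ to $S_\bullet$ and the representing maps $f_\bullet$ to $R_\bullet$. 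Passing to DGAs as in Example~\ref{ex:3}, the two choices then fit into a commuting ladder, so the lifts $\tilde f$ and the derivations $D\circ\tilde f_2$ agree up to the contractible ambiguity inherent in cofibrant replacement, leaving the represented class in $\SH^3(B/k,X)$ unchanged.

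I expect the main obstacle to be bookkeeping rather than any genuine difficulty: one must confirm that the various resolutions and lifts chosen in the proof of Proposition~\ref{prop:1}---the cofibrant replacement $B_\bullet$, the trivial-fibration argument producing $B'_\bullet$, and the lift $\tilde f$---can all be chosen \emph{functorially enough} that an isomorphism of butterflies yields a genuine homotopy of the resulting cocycles, so that the cohomology classes, and not merely representative cochains, coincide. Concretely, the cleanest route is to argue that since $\SH^3$ is computed in a homotopy category (the model structure on DGAs of~\cite{MR2270566}), the class depends only on the quasi-isomorphism type of $E_\bullet$ over $B$ together with the map to $R_\bullet$ over $A$; as $\alpha$ induces an isomorphism of all this data, the class is automatically preserved. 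I would therefore phrase the proof so as to minimize explicit cocycle manipulation, invoking the homotopy-invariance of the classification already built into the formalism of sect.~\ref{sec:H-of-rings} and deferring the routine compatibility checks of~\eqref{eq:7} under $\alpha$ to the reader.
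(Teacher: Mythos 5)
Your proposal is correct and follows essentially the same route as the paper: the paper's own proof simply notes that, by Definition~\ref{def:6} and Theorem~\ref{thm:2}, a natural isomorphism $F\Rightarrow F'$ is represented by an isomorphism $\alpha\colon E\to E'$ of butterflies compatible with the diagram~\eqref{eq:14}, and then appeals to the calculations in the proof of Proposition~\ref{prop:1}. Your write-up is a more detailed unpacking of that same argument (invariance of $\xi$, $\eta$, the induced isomorphism $E_\bullet\isoto E'_\bullet$, and homotopy invariance of the class), but no new idea is needed or introduced.
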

\begin{proof}
  From Definition~\ref{def:6} and Theorem~\ref{thm:2}, a natural isomorphism $a\colon F\Rightarrow F'\colon [N\to S]\sptilde \to [M\to R]\sptilde$ is represented by an isomorphism $\alpha\colon E\to E'$ fitting in the diagram~\eqref{eq:14}.
  Thus the statement follows from the calculations in the proof of Proposition~\ref{prop:1}.
\end{proof}

\subsection{The class of a ring-like stack}
\label{sec:class-ring-like}

Now let $(\s,k)$ be a ringed site, let $A$ be a $k$-algebra of $\T=\shs$, and let $X$ be an $A$-bimodule. Define $\derD^q_{k}(A,X)$ as the $q^{\mathrm{th}}$ derived functor of the composite functor
\begin{equation*}
  X\rightsquigarrow \Hom_{\shs_\ab}(\ZZ,\shDer_k(A,X)) \iso
  \Hom_{\shs_\ab}(\ZZ,\shHom(D_{A/k},X)),
\end{equation*}
where $\ZZ$ denotes the constant sheaf associated to $\ZZ[\pt]$, where $\pt$ denotes the terminal object of $\T=\shs$. (Note that for any abelian object $F$ of $\T$, $\Hom_{\shs_\ab}(\ZZ,F)\iso \Hom_{\shs}(\pt,F)$.) Also, we let $\shDerD^q_k(A,X)$ denote the sheaf associated to the presheaf $U\mapsto \DerD^q_k(A\rvert_U,X\rvert_U)$. By \cite{MR0257068,MR0223432}, $\DerD^q_k(A,X)$ can be computed as sheaf cohomology, hence we have
\begin{equation*}
  \Ext_{\shs_\ab}^p(\ZZ,\shDerD^q_k(A,X)) \Longrightarrow \derD_k^{p+q}(A,X).
\end{equation*}

Let $\str$ be a ring-like stack over $\s$, with $\pi_0(\str)=A$, $\pi_1(\str)=X$.
\begin{proposition}
  \label{prop:3}
  $\str$ determines a class in $\Hom_{\shs_\ab}(\ZZ,\shDerD^2_k(A,X))$.
\end{proposition}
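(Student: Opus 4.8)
The plan is to combine the local classification already available from the preceding sections with a descent/globalization argument. By Theorem~\ref{thm:1}, the ring-like stack $\str$ admits, at least locally, a presentation by a crossed bimodule $\del\colon M\to R$ whose associated crossed extension
\begin{equation*}
  \xymatrix@1{0 \ar[r] & X \ar[r] & M \ar[r]^\del & R \ar[r] & A \ar[r] & 0}
\end{equation*}
has $\pi_1(\str)=X$ and $\pi_0(\str)=A$. Over a suitable object $U$ trivializing the presentation, this crossed extension represents a class in $\SH^3(A\rvert_U/k, X\rvert_U)\iso \DerD^2_k(A\rvert_U,X\rvert_U)$ by the diagram~\eqref{eq:13} of section~\ref{sec:H-of-rings}: one chooses a cofibrant resolution $B_\bullet$ of $A\rvert_U$, lifts the identity to a map of complexes, and extracts the derivation $f$ whose class is the desired element. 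So the first step is simply to record that, locally, $\str$ furnishes a well-defined section of the presheaf $U\mapsto \DerD^2_k(A\rvert_U,X\rvert_U)$, hence a local section of its sheafification $\shDerD^2_k(A,X)$.

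Next I would check that these local classes glue. The key input is Proposition~\ref{prop:1} together with Proposition~\ref{prop:2}: a change of local presentation of $\str$ is, by Theorem~\ref{thm:2}, encoded by a butterfly (in fact an equivalence, since both presentations present the same stack), and Proposition~\ref{prop:1} shows the induced maps $\eta_*$ and $\xi^*$ on cohomology match the two local classes via $\eta_*([y])=\xi^*([x])$. Since an equivalence induces the identity on $\pi_0=A$ and $\pi_1=X$, the maps $\xi$ and $\eta$ are identities, so the two local classes literally coincide on overlaps. By Proposition~\ref{prop:2} the class is independent of the chosen representative butterfly, so there is no ambiguity in the gluing data. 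Thus the local sections of $\shDerD^2_k(A,X)$ agree on overlaps and patch to a single global section.

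Finally I would phrase this as a global section in the desired $\Hom$-group. Using the identification $\Hom_{\shs_\ab}(\ZZ,F)\iso \Hom_{\shs}(\pt,F)$ recorded in section~\ref{sec:class-ring-like} (valid for any abelian object $F$ of $\T$), a global section of the abelian sheaf $\shDerD^2_k(A,X)$ over the terminal object $\pt$ is exactly an element of $\Hom_{\shs_\ab}(\ZZ,\shDerD^2_k(A,X))$. The well-definedness just established shows the assignment $\str\mapsto[\str]$ lands here. I expect the main obstacle to be the gluing/descent step rather than the purely local classification: one must verify carefully that the comparison isomorphisms of presentations act trivially on the homotopy objects and that the resulting cohomology classes are genuinely compatible in the sheafified derived-functor setting, rather than merely agreeing after passing to a common refinement. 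The derived-functor formalism for $\DerD^2_k$ and its sheafification—together with the hypercover/2-descent picture supplied by Theorem~\ref{thm:3}—should make this compatibility automatic, but it is the point where the argument has genuine content.
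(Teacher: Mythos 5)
Your proposal is correct and takes essentially the same route as the paper: a local crossed-bimodule presentation yielding a local Shukla/Andr\'e--Quillen class, butterflies as the transition data, Proposition~\ref{prop:1} (with $\xi$ and $\eta$ reducing to identities because the presentations all present the fixed stack $\str$) to get agreement of the pulled-back classes, Proposition~\ref{prop:2} to remove the dependence on the representative butterfly, and sheaf gluing combined with $\Hom_{\shs_\ab}(\ZZ,F)\iso\Hom_{\shs}(\pt,F)$ to land in the stated group. The only difference is presentational: the paper writes the descent simplicially over an explicit hypercover $V_\bullet\to\pt$, deriving the cocycle relation $f_{02}=f_{01}\circ f_{12}$ over $V_2$, where you argue in \v{C}ech-style ``overlaps'' language.
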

\begin{proof}
Let $V_\bullet\to \pt$ be a hypercover of the terminal object of $\shs$ (or of $\s$ if it does have a terminal object). By Theorems~\ref{thm:2},~\ref{thm:3} and~\ref{thm:4}, we may represent $\str$ by descent along bimodule butterflies by way of a local equivalence $\str_0\iso \str\rvert_{V_0}$, where $\str_0\iso [M\to R]\sptilde$, and $\del\colon M\to R$ is a crossed bimodule of $(\s/V_0)\sptilde$, plus a morphism of ring-like stacks $F\colon d_0^*\str_0\isoto d_1^*\str_0$. We have a four-term exact sequence of the type of~\eqref{eq:12} over $V_0$,
\begin{equation*}
  \xymatrix@1{0 \ar[r] & \epsilon^* X \ar[r] & M  \ar[r]^\del & R \ar[r] & \epsilon^*A \ar[r] & 0\,,}
\end{equation*}
where $\epsilon\colon V_0\to \pt$, and over $V_1$ the morphism $F$ corresponds to a butterfly $E$ such that we have a diagram of the form~\eqref{eq:14}, namely
\begin{equation*}
  \xymatrix@R-0.4pc{%
    0 \ar[r] & (\epsilon d_0)^*X \ar[dd]^{\eta} \ar[r] & d_0^*M \ar[dr]_\kappa \ar[rr]^\del && d_0^*R \ar[r] & (\epsilon d_0)^*A \ar[dd]^{\xi} \ar[r] & 0\phantom{\,,} \\
    & & & E \ar[ur]_\pi \ar[dr]^\jmath \\
    0 \ar[r] & (\epsilon d_1)^*X\ar[r] & d_1^*M \ar[ur]^\imath \ar[rr]_\del && d_1^*R \ar[r] & (\epsilon d_1)^*A \ar[r] & 0\,.}
\end{equation*}
These data satisfy the obvious cocycle conditions in the 2-categorical sense (cf. \cite[\S 5.3]{ButterfliesI}): over $V_2$ we have a natural isomorphism $a\colon F_{01}\circ F_{12}\Rightarrow F_{02}$ satisfying the relation $a_{023}\circ a_{012} = a_{013}\circ a_{123}$ over $V_3$. (Recall the missing index convention: $R_0=(d_1d_2)^*R$,..., $F_{01}=d_2^*F$,..., $a_{012}=d_3^*a$, etc.) These conditions determine those satisfied by $E$ and its pullbacks; in particular, we have an isomorphism of butterflies with the morphism
\begin{equation*}
  \alpha \colon E_{12}\bigoplus^{M_{1}}_{R_{1}} E_{01}\lto E_{02}
\end{equation*}
satisfying the same coherence condition as $a$ over $V_3$.

Note that $\xi$ and $\eta$ in the above diagram equal the respective identity maps of $A$ and $X$ modulo the relations $d_0^*\epsilon^* \iso (\epsilon d_0)^* = (\epsilon d_1)^* \iso d_1^*\epsilon^*$, hence by Proposition~\ref{prop:1} we obtain an isomorphism
\begin{equation*}
  f \colon
  \xymatrix@1{%
    **[l] \SH^3((A\rvert_{V_1})/k,X\rvert_{V_1}) \ar@(dr,ur)[]_\wr }
\end{equation*}
carrying the class of the crossed extension $0\to X\to M_1\to R_1\to A\to 0$ to that of $0\to X\to M_0\to R_0\to A\to 0$.

By Proposition~\ref{prop:2}, $f$ satisfies the relation
\begin{equation*}
  f_{02} = f_{01}\circ f_{12}\,,
\end{equation*}
so that $f$ is a global section of $\shDerD^2_k(A,X)$, as wanted.
\end{proof}

\appendix

\section{On the isomorphism between Shukla and André-Quillen}
\label{sec:appendix}
\numberwithin{equation}{section}%

We reproduce an argument by T.~Pirashvili showing the isomorphism between Shukla cohomology and the cohomology of associative algebras as defined by Quillen.

Let $k$ be a commutative, unital ring, $A$ an associative $k$-algebra and $M$ an $A$-bimodule. It is well known that for a $k$-algebra $A$ one can modify the standard complex computing its Hochschild cohomology so that for the resulting complex $C^\bullet(A,M)$ we have:
\begin{equation*}
  H^i(C^\bullet(A,M)) =
  \begin{cases}
    \HH^{i+1}(A,M) & i>0, \\
    \Der_k(A,M) & i=0.
  \end{cases}
\end{equation*}
The extensions of $C^\bullet(A,M)$ to simplicial $k$-algebras or differential graded algebras (as usual assumed to be chain algebras, i.e.\ supported in negative cohomological degrees) are straightforward, with obvious modifications. For example, for a simplicial $k$-algebra $A_\bullet$ and a $\pi_0(A_\bullet)$-bimodule $M$, one applies the above construction degree-wise, to obtain a cosimplicial object in the category of cochain complexes:
\begin{math}
  [n] \mapsto C^\bullet(A_n,M).
\end{math}
Then consider the total complex $C^\bullet(A_\bullet,M)$ of this cosimplicial cochain complex. For differential graded algebras the situation is similar: if $A_\bullet$ is a chain algebra, $M$ is considered as a complex whose only nonzero term is placed in degree zero, so that in effect $M$ is an $H_0(A_\bullet)$-bimodule.

Also, if $A_\bullet$ is a simplicial $k$-algebra, denote by $A_\bullet\sptilde$ the chain complex whose differential is the alternating sum of all the face maps, and by $NA_\bullet$ its normalization.

Let $A$ be a $k$-algebra. A simplicial $k$-algebra $P_\bullet$ such that each $P_i$ is a projective $k$-module and
\begin{equation*}
  \pi_i(P_\bullet) = 
  \begin{cases}
    0 & i>0,\\
    A & i=0
  \end{cases}
\end{equation*}
is called a flat simplicial resolution of $A$. Cofibrant replacements of $A$ are taken in the model category of simplicial $k$-algebras of~\cite{MR0257068}. Any cofibrant replacement is a flat simplicial resolution.
\begin{lemma}\hfill
  \begin{enumerate}
  \item Let $f\colon F_\bullet\to P_\bullet$ be a weak equivalence of flat simplicial resolutions of $A$. Then the resulting morphism
    \begin{math}
      C^\bullet(P_\bullet,M) \lto C^\bullet(F_\bullet,M)
    \end{math}
    is a weak equivalence.
  \item If $P_\bullet$ is a flat simplicial resolution of $A$, then we have an isomorphism $\DerD_k^\bullet(A,M)\iso H^\bullet(C^\bullet(P_\bullet,M))$, where $\DerD_k^\bullet(A,M)$ is the associative algebra cohomology defined in \loccit
  \end{enumerate}
\end{lemma}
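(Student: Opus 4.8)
The plan is to read $C^\bullet(A_\bullet,M)$ as the total complex of the double complex $C^{s,t}=\Hom_k\bigl(\bar{A}_t^{\otimes q(s)},M\bigr)$, where $s$ is the (modified) Hochschild degree, $t$ the cosimplicial degree coming from $A_\bullet$, $\bar{A}_t$ the augmentation ideal, and $q(s)$ the tensor degree occurring in the modified Hochschild complex (its precise value is immaterial below). Since the augmentation $A_t\to k$ splits through the unit, each $\bar{A}_t$ is a direct summand of the projective $k$-module $A_t$, hence projective, and therefore so is every tensor power $\bar{A}_t^{\otimes q}$. Both statements then follow by comparing the two spectral sequences of this first-quadrant double complex.

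For (1) I would run the spectral sequence that takes cohomology in the cosimplicial direction first:
\[
{}'E_1^{s,t}(A_\bullet)=H^t\bigl(\Hom_k\bigl((\bar{A}_\bullet^{\otimes q(s)})\sptilde,M\bigr)\bigr).
\]
The weak equivalence $f$ restricts to a weak equivalence $\bar f\colon\bar F_\bullet\to\bar P_\bullet$ of simplicial projective $k$-modules; by Eilenberg--Zilber together with the K\"unneth theorem---where flatness forces the Tor-terms to vanish---the $q$-fold degreewise tensor power $\bar f^{\otimes q}$ is again a weak equivalence, so $(\bar F_\bullet^{\otimes q})\sptilde\to(\bar P_\bullet^{\otimes q})\sptilde$ is a quasi-isomorphism of bounded-below complexes of projective $k$-modules. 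Its mapping cone is an acyclic bounded-below complex of projectives, hence contractible, and contractibility is preserved by the additive functor $\Hom_k(-,M)$; thus $\Hom_k(-,M)$ carries this quasi-isomorphism to a quasi-isomorphism. Consequently $f$ induces an isomorphism on ${}'E_1$, and since both spectral sequences converge, it induces an isomorphism $H^\bullet(C^\bullet(P_\bullet,M))\isoto H^\bullet(C^\bullet(F_\bullet,M))$.

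For (2) I would first establish the claim for the canonical free simplicial resolution $Q_\bullet$ (the bar resolution of the free-associative-algebra adjunction, each $Q_t$ a free $k$-algebra), and then deduce the general case from (1). Running instead the spectral sequence that takes Hochschild cohomology first gives, by the defining property of the modified complex,
\[
{}''E_1^{s,t}=H^s\bigl(C^\bullet(Q_t,M)\bigr)=
\begin{cases}
\Der_k(Q_t,M)&s=0,\\
\HH^{s+1}(Q_t,M)&s\geq 1.
\end{cases}
\]
Each $Q_t$ is a free associative $k$-algebra, whose Hochschild cohomology vanishes in degrees $\geq 2$; hence the rows $s\geq 1$ are zero and the spectral sequence collapses onto the cosimplicial cochain complex $t\mapsto\Der_k(Q_t,M)$, whose cohomology is, by construction, Quillen's $\DerD^\bullet_k(A,M)$. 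This yields $H^\bullet(C^\bullet(Q_\bullet,M))\iso\DerD^\bullet_k(A,M)$. For an arbitrary flat resolution $P_\bullet$, the constant map $P_\bullet\to A$ is a trivial fibration in the model category of simplicial $k$-algebras (it is a weak equivalence, and surjectivity in positive degrees is automatic because $A$ is concentrated in degree $0$), so cofibrancy of $Q_\bullet$ produces a lift $Q_\bullet\to P_\bullet$, necessarily a weak equivalence of flat resolutions. Applying (1) gives $H^\bullet(C^\bullet(P_\bullet,M))\iso H^\bullet(C^\bullet(Q_\bullet,M))\iso\DerD^\bullet_k(A,M)$.

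The step I expect to be the crux is the preservation of the quasi-isomorphism under $\Hom_k(-,M)$ in (1): this is precisely where the projectivity (flatness) built into the definition of a flat simplicial resolution is indispensable, for otherwise the K\"unneth comparison of tensor powers would acquire Tor-corrections and a quasi-isomorphism need not survive dualization into $M$. By contrast, the vanishing of $\HH^{\geq 2}$ for free algebras used in (2) and the lifting argument reducing a general flat resolution to a cofibrant one are both standard homotopical algebra.
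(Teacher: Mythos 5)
Your proposal is correct and takes essentially the same route as the paper's (much terser) proof: part (1) is the K\"unneth argument (which you spell out, including the dualization step the paper leaves implicit), and part (2) is the degeneration of the bicomplex spectral sequence forced by the vanishing of higher Hochschild cohomology of free algebras, together with a reduction of an arbitrary flat resolution to a free/cofibrant one via lifting and part (1). One small correction that does not affect the argument: the levels $A_t$ of a flat resolution of $A$ carry no augmentation to $k$ in general, so you should phrase the double complex with the un-normalized modified Hochschild complex (tensor powers of $A_t$ itself, which are projective) rather than with an ``augmentation ideal'' $\bar{A}_t$ --- as you yourself note, the precise tensor degree is immaterial, and with this change every step goes through verbatim.
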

\begin{proof}
  The first point follows from Kunneth theorem. For the second, we can assume that $P_\bullet$ is a cofibrant replacement of $A$. In this case each $P_i$ is free, so $H^j(C^\bullet(P_i,M))=0$ for $j>0$. Then the spectral sequence determined by the bicomplex $C^\bullet(P_\bullet,M)$ degenerates, yielding the required isomorphism.
\end{proof}
For the analogous result for differential graded algebras, see \cite{MR2270566}.

\begin{claim}
  There is an isomorphism $\SH^{i+1}(A/k,M)\iso \DerD^i_k(A,M)$.
\end{claim}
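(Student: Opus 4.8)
The plan is to show that both sides are computed from the same data attached to a flat simplicial resolution $P_\bullet\to A$ (each $P_n$ a projective $k$-module, $\pi_0 P_\bullet = A$ and $\pi_i P_\bullet = 0$ for $i>0$), the discrepancy being a single index shift. The right-hand side is already controlled: by part (2) of the preceding Lemma one has $\DerD^i_k(A,M)\cong H^i(C^\bullet(P_\bullet,M))$ for such a resolution, so it suffices to identify $H^i(C^\bullet(P_\bullet,M))$ with $\SH^{i+1}(A/k,M)$.

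First I would recall the flat-resolution description of Shukla cohomology. Since $\SH^\bullet(B/k,M)\cong \HH^\bullet(B,M)$ whenever $B$ is $k$-flat, and each $P_n$ is $k$-projective, applying Hochschild cohomology degreewise and totalizing computes Shukla cohomology: $\SH^m(A/k,M)\cong H^m(\mathrm{Tot}\,\HH^\bullet(P_\bullet,M))$. Independence of the resolution is the Hochschild analogue of part (1) of the Lemma (same Künneth argument), and its agreement with the differential-graded definition used in sect.~\ref{sec:shukla} is supplied by \cite{MR2270566}.

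Next I would make the shift precise. By construction the modified complex is the brutal truncation $\sigma_{\geq 1}$ of the Hochschild cochain complex shifted down by one, so $H^j(C^\bullet(B,M))\cong H^{j+1}(\sigma_{\geq 1}\HH^\bullet(B,M))$, recovering $\HH^{j+1}(B,M)$ for $j\geq 1$ and $\Der_k(B,M)$ for $j=0$. Applied degreewise to $P_\bullet$, this gives a short exact sequence of bicomplexes $0\to \sigma_{\geq 1}\HH^\bullet(P_\bullet,M)\to \HH^\bullet(P_\bullet,M)\to M\to 0$, whose quotient is the constant cosimplicial module $M$ in Hochschild degree $0$; its total complex has cohomology concentrated in degree $0$. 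The associated long exact sequence of total complexes then yields $H^m(\mathrm{Tot}\,\sigma_{\geq 1}\HH)\cong H^m(\mathrm{Tot}\,\HH)=\SH^m(A/k,M)$ for $m\geq 2$. Combining the shift $H^i(C^\bullet(P_\bullet,M))\cong H^{i+1}(\mathrm{Tot}\,\sigma_{\geq 1}\HH)$ with part (2) of the Lemma produces the desired $\DerD^i_k(A,M)\cong \SH^{i+1}(A/k,M)$ in the range $i\geq 1$, which is exactly the range used in the body (e.g.\ $\SH^3\cong \DerD^2$).

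The main obstacle I expect is twofold. The delicate computational point is the low-degree bookkeeping: the bottom ($\HH^0$) row of the bicomplex, the distinction between derivations and inner derivations, and the correction terms in the long exact sequence must be tracked carefully, the degree shift being clean only once $i\geq 1$. The second, more foundational point is reconciling the flat-resolution description of Shukla cohomology with the differential-graded definition of \cite{MR2270566} invoked in sect.~\ref{sec:shukla}; this is precisely where part (1) of the Lemma, its chain-algebra analogue, and the normalization comparison between the simplicial and differential-graded models are needed.
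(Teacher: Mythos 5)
Your outer reduction is fine, and the truncation bookkeeping in your third paragraph is correct: the quotient of the brutal truncation is the constant cosimplicial module $M$ placed in Hochschild degree $0$, its total complex has cohomology concentrated in degree $0$, and the long exact sequence gives $H^m(\mathrm{Tot}\,\sigma_{\geq 1})\iso H^m(\mathrm{Tot})$ for $m\geq 2$. The genuine gap is the step you merely ``recall'' in your second paragraph: that a flat simplicial resolution $P_\bullet\to A$ computes Shukla cohomology, i.e.\ $\SH^m(A/k,M)\iso H^m$ of the totalized degreewise Hochschild cochain complex of $P_\bullet$. Up to the harmless index shift, this statement \emph{is} the Claim, so assuming it makes the argument circular. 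The inference you offer for it is a non sequitur: knowing $\SH^\bullet(P_n/k,M)\iso \HH^\bullet(P_n,M)$ for each $k$-projective $P_n$ separately says nothing about what the totalization over $n$ computes for $A$. Nor can the agreement with the differential graded definition be cited from \cite{MR2270566}: that reference defines and computes Shukla cohomology via chain differential graded algebra resolutions only, and the very reason the appendix exists (Pirashvili's argument) is that the simplicial-versus-DG comparison is not available off the shelf.

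What has to be supplied---and it is the entire content of the paper's proof---is the normalization comparison that you list as an expected obstacle but never carry out. Concretely: the normalization $NP_\bullet$ is a flat chain algebra resolution of $A$, so the DG analogue of the Lemma (this is what \cite{MR2270566} does provide) identifies $H^i(C^\bullet(NP_\bullet,M))$ with $\SH^{i+1}(A/k,M)$; then the weak equivalence $NP_\bullet\to P_\bullet\sptilde$ induces, by the Eilenberg--Zilber theorem, weak equivalences $NP_\bullet\otimes\dots\otimes NP_\bullet\to P_\bullet\sptilde\otimes\dots\otimes P_\bullet\sptilde$ of all $n$-fold tensor powers, hence (by K\"unneth, using $k$-projectivity of the terms, exactly as in part (1) of the Lemma) a quasi-isomorphism between $C^\bullet(P_\bullet,M)$ and $C^\bullet(NP_\bullet,M)$. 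Combined with part (2) of the Lemma this yields the Claim directly. Note also that once this step is in place your truncation long exact sequence becomes superfluous: working with the modified complex $C^\bullet$ on both the simplicial and the DG side absorbs the shift $i\leftrightarrow i+1$ into the definitions, and so avoids both the low-degree analysis and the restriction to $i\geq 1$ that your route imposes.
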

\begin{proof}
  Let $P_\bullet$ be a flat simplicial resolution of $A$. Then the normalization $NP_\bullet$ is a flat resolution of $A$, and so it is enough to show that $C^\bullet(P_\bullet,M)$ and $C^\bullet(NP_\bullet,M)$ have isomorphic cohomology.

  Now, there is a weak equivalence $NP_\bullet \to P_\bullet\sptilde$ which, by the Eilenberg-Zilber theorem, gives a weak equivalence $NP_\bullet\otimes\dots \otimes NP_\bullet\to P_\bullet\sptilde\otimes \dots \otimes P_\bullet\sptilde$ between the $n$-fold tensor products. It follows that $C^\bullet(P_\bullet,M)\to C^\bullet(NP_\bullet,M)$ is a weak equivalence.
\end{proof}

\phantomsection  
\addcontentsline{toc}{section}{References}  
\printbibliography
\end{document}